\numberwithin{equation}{section}
\newtheorem{theorem}{Theorem}[section]
\newtheorem{corollary}{Corollary}
\newtheorem{lemma}[theorem]{Lemma}
\theoremstyle{definition}
\title[Frequency locking of modulated waves]
      {Frequency locking of modulated waves}
\author[L. Recke, A. Samoilenko et al.]{}
\subjclass{Primary: 34C30, 34C14, 34C15; Secondary: 34C29, 34C60. 34D35, 34D06}
 \keywords{Frequency locking, modulated waves, synchronization.}
 \email{recke@math.hu-berlin.de}
 \email{sam@imath.kiev.ua}
 \email{teplinsky@imath.kiev.ua}
 \email{vitk@imath.kiev.ua}
 \email{yanchuk@math.hu-berlin.de}
\thanks{ L.R. and S.Y. acknowledges the support of DFG Research Center \textsc{Matheon}
``Mathematics for key technologies'' under the projects D8 and D21.
A.S., A.T. and V.T. acknowledge the support of DFG cooperation project between
Germany and Ukraine 436UKR113/100/0-1.}
\begin{document}
\maketitle

\centerline{\scshape Lutz Recke }
\medskip
{\footnotesize
 \centerline{Institute of Mathematics, Humboldt University of Berlin,}
   \centerline{Unter den Linden 6, 10099 Berlin, Germany}
} 

\medskip

\centerline{\scshape Anatoly Samoilenko, Alexey Teplinsky and Viktor Tkachenko,}
\medskip
{\footnotesize
 \centerline{ Institute of Mathematics, National Academy of Sciences of Ukraine}
   \centerline{3 Tereschen\-kivska St., 01601 Kiev, Ukraine}
}

\medskip
\centerline{\scshape Serhiy Yanchuk}
\medskip
{\footnotesize
 \centerline{Institute of Mathematics, Humboldt University of Berlin,}
   \centerline{Unter den Linden 6, 10099 Berlin, Germany}
} 

\bigskip

 \centerline{(Communicated by the associate editor name)}

\begin{abstract}
We consider the behavior of a modulated wave solution to
an $\mathbb{S}^1$-equivariant autonomous system of differential equations under an external
forcing of modulated wave type. The modulation frequency of the forcing is assumed to be close to the
modulation frequency of the modulated wave solution, while the  wave frequency of the forcing is supposed to be far
from that of the modulated wave solution. We describe the domain in the three-dimensional
control parameter space (of frequencies and amplitude of the forcing)
where stable locking of the modulation frequencies of the forcing and the modulated wave solution
occurs.

Our system is a simplest case scenario for the behavior of  self-pulsating lasers under the influence of external
periodically modulated
optical signals.
\end{abstract}

\section{Introduction}

This paper investigates systems of differential equations of the type
\begin{eqnarray}
\frac{dx}{dt} & = & f(x)+g(x)|y|^{2},\label{01}\\
\frac{dy}{dt} & = & h(x)y+\gamma e^{i\alpha t}a(\beta t),\label{02}
\end{eqnarray}
where $x\in\mathbb{R}^{n},$ $y\in\mathbb{C}$, the functions  $f,g:\mathbb{R}^{n}\to\mathbb{R}^{n},$
$h:\mathbb{R}^{n}\to\mathbb{C}$, and $a:\mathbb{R}\to\mathbb{C}$
are sufficiently smooth of class $C^{l}$ with some positive
integer $l$. The function $a$ is $2\pi$-periodic, and $\alpha>0$, $\beta>0$
and $\gamma\ge0$ are parameters.
We assume that for $\gamma=0$ the unperturbed system
\begin{eqnarray}
\frac{dx}{dt} & = & f(x)+g(x)|y|^{2},\label{eq:010}\\
\frac{dy}{dt} & = & h(x)y,\label{eq:020}
\end{eqnarray}
has an exponentially orbitally stable quasi-periodic solution
of modulated wave type
\begin{eqnarray}
x(t)=x_{0}(\beta_{0}t),\ y(t)=y_{0}(\beta_{0}t)e^{i\alpha_{0}t}.\label{qp}
\end{eqnarray}
Here $\alpha_{0}>0$ and $\beta_{0}>0$  are constants, while
$x_{0}:\mathbb{R} \to \mathbb{R}^n$ and $y_{0}:\mathbb{R} \to \mathbb{C}$ are smooth $2\pi$-periodic functions.
We assume that the following nondegeneracy condition holds:
\begin{equation}
\label{>0}
\mbox{rank}
\left[\begin{array}{cc}
x_0'(\psi)&  0\\
 \Re y_0'(\psi)& -\Im  y_0(\psi)\\
 \Im y_0'(\psi)& \Re y_0(\psi)
\end{array}\right]=2.
\end{equation}
It is easy to verify that (\ref{>0}) is true for all $\psi \in \mathbb{R}$ if it is true for one $\psi$.
Moreover, without loss of generality we assume
that $\psi \mapsto \arg y_{0}(\psi)$ is periodic, i.e.\ the curve $y=y_{0}(\psi)$
in $\mathbb{C}$ does not loop around the origin (otherwise we should replace $y_0(\beta_0 t)$
by $y_0(\beta_0 t) e^{ik\beta_0 t}$ and $\alpha_0$ by $\alpha_0-k\beta_0$ with an appropriate $k \in \mathbb{Z}$).

It follows from assumption (\ref{>0}) that the set
\[
\mathcal{T}_{2}:=
\{(x_{0}(\psi),y_{0}(\psi)e^{i\varphi})\in\mathbb{R}^{n}\times\mathbb{C}:\;\varphi,\psi\in\mathbb{T}_{1}\},\]
where $\mathbb{T}_{1}=\mathbb{R}/(2\pi\mathbb{Z})$ is the unit circle,
is diffeomorphic to a two-dimensional torus.
Obviously, $\mathcal{T}_{2}$ is invariant with respect to the flow of  (\ref{eq:010})--(\ref{eq:020}),
and the solution (\ref{qp}) lies on  $\mathcal{T}_{2}$.

Roughly speaking, our main result describes the domain in the three-dimensional space of the
control parameters $\alpha$, $\beta$ and $\gamma$ with $|\alpha-\alpha_0|\gg 1$ and $\beta \approx \beta_0$
such that the following holds: For almost any solution $(x(t),y(t))$ to  (\ref{01})--(\ref{02}),
which is at a certain moment
close to  $\mathcal{T}_{2}$, there exists $\sigma \in \mathbb{R}$ such that
$$
\|x(t)-x_0(\beta t+\sigma)\|+\bigl||y(t)|-|y_0(\beta t+\sigma)|\bigr| \approx 0 \mbox{ for large } t.
$$

Let us reformulate our result in a more abstract language as well as in the language of a
physical application.

Abstractly speaking,  (\ref{eq:010})--(\ref{eq:020}) is an autonomous system which is equivariant
under the  $\mathbb{T}_{1}$-action $(x,y) \mapsto(x,e^{i\varphi}y)$, $\varphi \in  \mathbb{T}_{1}$,
on the phase space. The solution (\ref{qp}) is a so-called modulated wave solution or relative periodic
orbit to the  $\mathbb{T}_{1}$-equivariant system  (\ref{eq:010})--(\ref{eq:020}). It is well-known that generically
those solutions are structurally stable under small perturbations that do not destroy the autonomy and the
$\mathbb{T}_{1}$-equivariance of the system. Thus, our results describe the behavior of exponentially orbitally stable
modulated wave solutions to  $\mathbb{T}_{1}$-equivariant systems under external forcings of modulated wave
type in the case when the difference between the internal and the external modulation frequencies $\beta-\beta_0$
is small while
the difference between the internal and the external wave frequencies $\alpha-\alpha_0$ is large.
Note that in \cite{Recke1998}
related results are described for the case when both differences of modulation and wave frequencies are small,
and \cite{Recke1998a} considers the case when the internal state as well as the external
forcing are not modulated.
For an even more abstract setting of these results see \cite{Chillingworth2000}.

System  (\ref{01})--(\ref{02}) is a paradigmatic model for the dynamical behavior
of self-pulsating lasers under the influence of external periodically modulated optical signals.
For more involved mathematical models see, e.g.,
\cite{Bandelow1998, Lichtner2007,Nizette2001, Peterhof1999, Radziunas2006,Sieber2002,Wieczorek2005}
and for related experimental results see
\cite{Feiste1994,Sartorius1998}. In  (\ref{01})--(\ref{02}), the state
variables $x$ and $y$ describe the electron density and the optical field of the laser, respectively.
In particular, the absolute value $|y|$ describes the intensity of  the optical field.
The  $\mathbb{T}_{1}$-equivariance of  (\ref{eq:010})--(\ref{eq:020}) is the result of the
invariance of autonomous optical models with respect to shifts of optical phases.
The solution (\ref{qp}) describes a so-called self-pulsating state of the laser
in the case when the laser is driven by electric currents which are constant in time.
In those states the electron density and the
intensity of  the optical field are time periodic with the same frequency.
Self-pulsating states usually appear as a result of Hopf bifurcations from so-called continuous wave states,
where the  electron density and the
intensity of  the optical field are constant in time.

The structure of our paper is as follows. The main results are formulated in
Sec.~\ref{sec:Main-result}. The proof is splitted into four sections.
In Sec.~\ref{sec:Averaging} we use averaging transformations \cite{Bogoliubov1961}
in order to eliminate the fast oscillating terms with the frequency
$\alpha$. It appears that the first non-vanishing terms after the
averaging procedure are of order $\gamma^{2}/\alpha^{2}$. Local coordinates
in the vicinity of the stable invariant toroidal manifold are introduced
in Sec.~\ref{sec:Local-coordinates} and then
in Sec.~\ref{sec:Existence} the existence of perturbed manifold is proved.
The global behavior of a
system on the perturbed torus is described in Sec.~\ref{sec:Investigation-of-the}.
Among others, the methods of perturbation theory \cite{Samoilenko1991,Samoilenko2005}
are used in our analysis.

\section{Main results\label{sec:Main-result}}

In new coordinates $x=x,y=re^{i\varphi}$, $r,\varphi\in\mathbb{R}$,
the unperturbed system (\ref{eq:010})--(\ref{eq:020}) has the
form \begin{eqnarray}
 &  & \frac{dx}{dt}=f(x)+g(x)r^{2},\label{eq:UNPx}\\
 &  & \frac{dr}{dt}=\Re h(x)r,\label{eq:UNPy}\\
 &  & \frac{d\varphi}{dt}=\Im h(x). \label{eq:UNPphi}
\end{eqnarray}
This system has, by assumption, the two-frequency solution
\begin{eqnarray*}
x(t)=x_{0}(\beta_{0}t),\ r(t)=r_{0}(\beta_{0}t):=|y_{0}(\beta_{0}t)|,\
\varphi(t)=\alpha_{0}t+\arg y_{0}(\beta_{0}t).
\end{eqnarray*}
The subsystem (\ref{eq:UNPx})--(\ref{eq:UNPy}) does not depend on $\varphi$
and has an exponentially orbitally stable periodic solution
$
x(t)=x_{0}(\beta_{0}t),\ r(t)=r_{0}(\beta_{0}t).
$
The corresponding variational system has the following form
\begin{eqnarray}
\frac{dz}{d\psi}=A(\psi)z,\quad & z\in\mathbb{R}^{n+1},\label{eq:VAR}\end{eqnarray}
where
\[
A(\psi):=\frac{1}{\beta_0}\left[\begin{array}{cc}
{\displaystyle f'(x_{0}(\psi))+g'(x_{0}(\psi))r_{0}^{2}(\psi)} & {\displaystyle 2g(x_{0}(\psi))r_{0}(\psi)}\\[2mm]
{\displaystyle \Re h'(x_{0}(\psi))r_{0}(\psi)} &
{\displaystyle \Re h(x_{0}(\psi))}\end{array}\right].\]
We assume that
\begin{equation}
\label{ass}
\left\{
\begin{array}{l}
\mbox{the trivial multiplier $1$ of the  monodromy matrix of linear periodic}\\
\mbox{system (\ref{eq:VAR}) has multiplicity one, and the absolute values of all}\\
\mbox{other multipliers are less than $1$.}
\end{array}
\right.
\end{equation}
The adjoint system
$$\frac{dp}{d\psi}=-A^{T}(\psi)p,$$
has a nontrivial periodic solution
$p(\psi)$ ($A^T$ denotes the transpose of $A$), which can be normalized such that 
$$p^{T}(\psi) \left[\begin{array}{c}
x'_{0}(\psi)\\
r'_{0}(\psi)\end{array}\right] =1 \mbox{ for all } \psi.
$$

Let us define the function
$\mathcal{G}:\mathbb{R}^n\times \mathbb{T}_1 \to \mathbb{R}^{n+1}$ as follows
\begin{equation}
\label{mathcalG}
{\mathcal G}(x,\psi):=
\left[\begin{array}{c}
g(x)|a(\psi)|^{2}\\
0\end{array}\right].
\end{equation}

Our first result describes the behavior (under the perturbation by the forcing term with  $\gamma>0$) 
of $\mathcal{T}_{2}\times \mathbb{R}$, which is an integral manifold to (\ref{01})--(\ref{02}) with 
$\gamma=0$, as well as the dynamics of the system (\ref{01})--(\ref{02}) on the perturbed manifold.

\begin{theorem}
\label{theorem01}
Let us assume that the conditions (\ref{>0}) and (\ref{ass}) are met.

Then for all $\beta_1<\beta_2$
there exist positive constants $\mu_{*}$, $\alpha_{*}$, $\delta$, $L$ and $\kappa$
such that for all $(\alpha,\beta,\gamma)$ with
\begin{equation}
\label{par}
\alpha>\alpha_{*},\; \beta_1<\beta<\beta_2 \mbox{ and } 0\le\frac{\gamma}{\alpha} <\mu_*
\end{equation}
the following holds:

(i) The system (\ref{01})--(\ref{02}) has a  three-dimensional integral manifold $\mathfrak{M}(\alpha,\beta,\gamma)$
which can be parametrized by $\psi,\varphi,t \in \mathbb{R}$ in the form
\begin{eqnarray*}
 &  & x=x_{0}(\psi)+\frac{\gamma}{\alpha^{2}}X_{1}\left(\psi,\varphi,\beta t,\alpha t,
\frac{1}{\alpha},\beta,\frac{\gamma}{\alpha}\right)+
\frac{\gamma^{2}}{\alpha^{2}}X_{2}\left(\psi,\varphi,\beta t,\alpha t,\frac{1}{\alpha},
\beta,\frac{\gamma}{\alpha}\right),\label{eq:IM0}\\
 &  & y=r_{0}(\psi)e^{i(\varphi + \phi(\psi))}-i\frac{\gamma}{\alpha}e^{i\alpha t}a(\beta t)
+\frac{\gamma}{\alpha^{2}}Y_{1}\left(\psi,\varphi,\beta t,\alpha t,\frac{1}{\alpha},\beta,\frac{\gamma}{\alpha}\right)
 \nonumber \\
 & & \hspace{10mm}
+ \frac{\gamma^{2}}{\alpha^{2}}Y_{2}\left(\psi,\varphi,\beta t,\alpha t,\frac{1}{\alpha},\beta,\frac{\gamma}{\alpha}\right)
\mbox{ \rm with } \phi(\psi):=\frac{1}{\beta_0}\int_{0}^{\psi}[\Im h(x_0(\xi)) - \alpha_0]d\xi.\label{eq:IM01}
\end{eqnarray*}
Here $X_j: \mathbb{R}^4 \times U \to \mathbb{R}$ and  $Y_j: \mathbb{R}^4 \times U \to \mathbb{C}$
are $C^{l-4}$ smooth,
$4\pi$-periodic with respect to $\psi$ and $2\pi$-periodic with respect to $\varphi,\beta t$ and
$\alpha t$ and
$$
U:=\left\{(\nu,\beta,\mu) \in \mathbb{R}^3:\; 0<\nu<\frac{1}{\alpha_*}, \;\beta_1<\beta<\beta_2, \;0\le \mu<\mu_*\right\}.
$$

(ii) The dynamics of (\ref{01})--(\ref{02}) on  $\mathfrak{M}(\alpha,\beta,\gamma)$ in coordinates $\psi, \varphi$ and $t$ is
determined by a system of the type
\begin{eqnarray}
 & & \frac{d\psi}{dt}=\beta_{0}+\frac{\gamma^{2}}{\alpha^{2}}p^{T}(\psi){\mathcal G}(x_{0}(\psi),
\beta t)+\frac{\gamma^{4}}{\alpha^{4}}\Psi_1\left(\psi,\beta t,\frac{\gamma}{\alpha}\right)
 \nonumber \\
 & &  \hspace{5mm}
  + \frac{\gamma^{2}}{\alpha^{3}}\Psi_2\left(\psi,\varphi,\beta t,\alpha t,
\frac{1}{\alpha},\beta,\frac{\gamma}{\alpha}\right)+\frac{\gamma}{\alpha^{3}}\Psi_3
\left(\psi,\varphi,\beta t,\alpha t,\frac{1}{\alpha},\beta,\frac{\gamma}{\alpha}\right),\label{eq:psi}\\
 & & \frac{d\varphi}{dt}=\alpha_{0}+\frac{\gamma^{2}}{\alpha^{2}}\Phi_1
\left(\psi,\beta t,\frac{\gamma}{\alpha}\right)+\frac{\gamma^{2}}{\alpha^{3}}\Phi_2
\left(\psi,\varphi,\beta t,\alpha t,\frac{1}{\alpha},\beta,\frac{\gamma}{\alpha}\right)\nonumber \\
 & & \hspace{25mm}
 + \frac{\gamma}{\alpha^{3}}\Phi_3\left(\psi,\varphi,\beta t,\alpha t,\frac{1}{\alpha},\beta,
\frac{\gamma}{\alpha}\right),\label{eq:varphi}\end{eqnarray}
where the functions $\Psi_1,\Phi_1: \mathbb{R}^2 \times [0,\mu_*) \to \mathbb{R}$ and  $\Psi_j,\Phi_j: \mathbb{R}^4 \times U \to \mathbb{R}$
$(j=2,3)$ are $C^{l-4}$-smooth, $4\pi$-periodic with respect to $\psi$
and $2\pi$-periodic with respect to $\varphi,\beta t$ and $\alpha t.$

(iii)
The integral manifold  $\mathfrak{M}(\alpha,\beta,\gamma)$ is exponentially attracting (uniformly with respect to
$(\alpha,\beta,\gamma)$ satisfying (\ref{par}))
in the following sense:
For any solution $(x(t),y(t))$ to  (\ref{01})--(\ref{02}) such that
$\mbox{\rm dist}((x(t_{0}),y(t_{0})),\mathcal{T}_{2})<\delta$ for certain
$t_{0} \in \mathbb{R}$ there is a unique solution $(\psi(t),\varphi(t))$ to (\ref{eq:psi})-- (\ref{eq:varphi})
such that
\begin{eqnarray*}
 & & \Bigl\Vert x(t)-x_{0}(\psi(t))-
\frac{\gamma}{\alpha^2}
\tilde X_1(t)
- \frac{\gamma^2}{\alpha^2} \tilde X_2(t) \Bigl\Vert +
 \\
& &
 +\Bigl| y(t)- i\frac\gamma\alpha e^{i\alpha t} a(\beta t)
- r_{0}(\psi(t))e^{i(\varphi(t) + \phi(\psi(t)))}
- \frac{\gamma}{\alpha^2} \tilde Y_{1}(t) -
\frac{\gamma^2}{\alpha^2} \tilde Y_{2}(t)
\Bigl|
\le
 \\
& &
\le
Le^{-\kappa(t-t_{0})}\mbox{\rm dist}\left(\left(x(t_{0}),y(t_{0})\right),
\mathcal{T}_{2}\right), \ t \ge t_0,
\end{eqnarray*}
where
\begin{eqnarray*}
& & \tilde X_j(t) := X_{j}\left(\psi(t),\varphi(t),\beta t,
 \alpha t,\frac{1}{\alpha},\beta,\frac{\gamma}{\alpha}\right),\quad j=1,2, \\
& & \tilde  Y_{j}(t) :=Y_j\left(\psi(t),\varphi(t),\beta t,\alpha t,\frac{1}{\alpha},\beta,
\frac{\gamma}{\alpha}\right), \quad j=1,2.
\end{eqnarray*}
\end{theorem}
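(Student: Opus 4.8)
The plan is to combine an averaging procedure that removes the fast $e^{i\alpha t}$ oscillations with the persistence theory for exponentially stable integral manifolds, exploiting that the genuine small parameters are $1/\alpha$ and $\gamma/\alpha$ while the neutral/stable splitting is supplied by assumption (\ref{ass}). First I would substitute $y = u - i\frac{\gamma}{\alpha}e^{i\alpha t}a(\beta t)$, chosen so that $\frac{d}{dt}\bigl(-i\frac{\gamma}{\alpha}e^{i\alpha t}a(\beta t)\bigr) = \gamma e^{i\alpha t}a(\beta t) + O(\gamma\beta/\alpha)$; this cancels the leading forcing in (\ref{02}) and leaves a forcing of order $\gamma/\alpha$ still proportional to $e^{i\alpha t}$. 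Expanding $|y|^2 = |u|^2 + \frac{\gamma^2}{\alpha^2}|a(\beta t)|^2 + O(\gamma/\alpha)\cdot(\text{terms}\propto e^{\pm i\alpha t})$ shows that in the $x$-equation (\ref{01}) a genuinely \emph{slow} perturbation $\frac{\gamma^2}{\alpha^2}g(x)|a(\beta t)|^2$ appears --- exactly the quantity encoded by $\mathcal{G}$ in (\ref{mathcalG}) --- while all $\alpha t$-dependent contributions are $O(\gamma/\alpha)$. Iterating near-identity averaging transformations in the spirit of \cite{Bogoliubov1961}, each step trading one power of $1/\alpha$, I would push every remaining fast-oscillating term to order $\gamma/\alpha^2$ and organise the result into the right-hand sides of (\ref{eq:psi})--(\ref{eq:varphi}); the explicit corrections $-i\frac{\gamma}{\alpha}e^{i\alpha t}a(\beta t)$ and the $\gamma/\alpha^2,\ \gamma^2/\alpha^2$ terms in the parametrisation are precisely the inverses of these substitutions, and each transformation costs derivatives, accounting for the drop to class $C^{l-4}$.

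Next, passing to the coordinates $x,\ r=|u|,\ \varphi$ of (\ref{eq:UNPx})--(\ref{eq:UNPphi}), the averaged $(x,r)$-subsystem is an $O(\gamma^2/\alpha^2)$ slow perturbation (plus uniformly small fast terms) of a system whose periodic orbit $(x_0(\psi),r_0(\psi))$ is exponentially stable with the Floquet structure dictated by (\ref{ass}): one neutral multiplier and a uniform spectral gap to the contracting directions. Using the associated projector I would introduce a transverse coordinate $w$ in the stable subspace, so that $\mathcal{T}_2\times\mathbb{R}$ becomes $\{w=0\}$, and seek the perturbed manifold as a graph $w=W(\psi,\varphi,\beta t,\alpha t)$. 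Writing the invariance equation and solving it by a Lyapunov--Perron/graph-transform fixed point in a space of bounded functions with the prescribed periodicities (including the doubled $4\pi$-period in $\psi$), the exponential dichotomy from (\ref{ass}) yields a uniform contraction once $1/\alpha$ and $\gamma/\alpha$ are small, which establishes assertion (i).

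Restricting the flow to $\mathfrak{M}(\alpha,\beta,\gamma)$ and reading off the tangential components gives (ii). The neutral ($\psi$) component is obtained by projecting the averaged vector field along the periodic orbit, and this projection is effected by the adjoint Floquet solution $p(\psi)$ normalised as in the statement; that is exactly why the leading drift in (\ref{eq:psi}) is $\frac{\gamma^2}{\alpha^2}p^{T}(\psi)\mathcal{G}(x_0(\psi),\beta t)$, a Fredholm/Melnikov-type phase response. The $\varphi$-equation follows from (\ref{eq:UNPphi}) evaluated on the manifold, and the remaining terms are bookkept by their scaling and by whether they retain $\alpha t$-dependence. Assertion (iii) is then the standard consequence of normal hyperbolicity: any trajectory starting within $\delta$ of $\mathcal{T}_2$ is attracted to $\mathfrak{M}(\alpha,\beta,\gamma)$ at the rate set by the spectral gap, and decomposing it into its footpoint $(\psi(t),\varphi(t))$ on the manifold plus the transverse deviation yields the stated estimate with uniform constants $L,\kappa$.

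The hard part will be making every estimate \emph{uniform} in $(\alpha,\beta,\gamma)$ over the unbounded range $\alpha>\alpha_*$. Because smallness comes only from $\gamma/\alpha$ and $1/\alpha$ while $\alpha$ itself is large, one must track precisely how many powers of $1/\alpha$ each averaging step gains and verify that the fast-oscillating remainders never spoil the contraction constant of the graph transform. Constructing the mixed-periodicity function spaces, on which the independent circle variables $\psi,\varphi,\beta t,\alpha t$ all appear and on which the Lyapunov--Perron operator is a uniform contraction, while simultaneously respecting the $C^{l-4}$ regularity budget, is the technical core; everything else is organised perturbation theory along the lines of \cite{Samoilenko1991,Samoilenko2005}.
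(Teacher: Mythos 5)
Your proposal is correct and takes essentially the same route as the paper: the identical first substitution $y=y_{1}-i\frac{\gamma}{\alpha}e^{i\alpha t}a(\beta t)$, iterated near-identity averaging in the spirit of \cite{Bogoliubov1961}, polar coordinates, Floquet-based transverse coordinates $z=z_{0}(\psi)+\Phi(\psi)h$ near the cycle, a Lyapunov--Perron/fiber-contraction construction of the invariant graph with uniform constants (the paper invokes \cite{Samoilenko1991,Yi1993b} and \cite{Chicone1999} here), projection by the adjoint periodic solution $p(\psi)$ for the reduced $\psi$-equation, and the exponential dichotomy for assertion (iii). The only bookkeeping difference is that the paper performs three averaging transformations, so the fast remainders reach orders $\gamma/\alpha^{3}$ and $\gamma^{2}/\alpha^{3}$ as the stated form of (\ref{eq:psi})--(\ref{eq:varphi}) requires, rather than the $\gamma/\alpha^{2}$ you quote; your own remark about tracking the powers of $1/\alpha$ covers exactly this point.
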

~\\

Let us define the function
$$
G(\psi):=\frac{1}{2\pi}\int_{0}^{2\pi}p^{T}(\psi+\theta) 
{\mathcal G}(x_0(\psi+\theta),\psi) d\theta
$$
and the numbers
\[
G_+:=\max_{\psi\in[0,2\pi]}G(\psi),\quad G_-
:=\min_{\psi\in[0,2\pi]}G(\psi).\]
For the sake of simplicity we will suppose that all singular points of $G$ are non-degenerate, i.e.
\begin{equation}
\label{nondeg}
G''(\psi)\not=0 \mbox{ for all } \psi \mbox{ such that } G'(\psi)=0.
\end{equation}
This implies that the set of  singular points of $G$ consists of an even number $2N$ of different points:
$$
\{\psi \in [0,2\pi):\,  G'(\psi)=0\}=\{\psi_1,\ldots,\psi_{2N}\}.
$$
The set of singular values of $G$ will be denoted by
$$
S:=\{G(\psi_1),\ldots,G(\psi_{2N})\}.
$$


The following two theorems describe the dynamics on
$\mathfrak{M}(\alpha,\beta,\gamma)$ in
more details. In particular, they show that for appropriate parameters $(\alpha,\beta,\gamma)$
there appears an even number of two-dimensional integral submanifolds, which determine the
frequency locking behavior we are interested in.

\begin{theorem}
\label{theorem02}
Assume that (\ref{>0}), (\ref{ass}) and (\ref{nondeg}) hold.

Then for any $\varepsilon>0$ there exist positive
$\mu^*$, $\mu_{*}$, and $\delta$ such that for all parameters
$(\alpha,\beta,\gamma)$ satisfying
\begin{equation}
\frac{\mu^*}{\alpha}<\gamma<\mu_{*}\alpha,
\label{eq:cond1}
\end{equation}
\begin{equation}
\label{eq:cond2}
G_-<
\frac{\alpha^2}{\gamma^2}(\beta-\beta_{0})
<G_+,
\end{equation}
\begin{equation}
\mathrm{\rm dist}\left(\frac{\alpha^{2}}{\gamma^{2}}\left(\beta-\beta_{0}\right),S\right)>\varepsilon
\label{eq:cond3}
\end{equation}
the following statements hold:

(i)  The system (\ref{01})--(\ref{02}) has an even number of two-dimensional integral manifolds
$\mathfrak{N}_{j}(\alpha,\beta,\gamma)
\subset \mathfrak{M}(\alpha,\beta,\gamma), \;j=1,...,2\hat{N}(\alpha,\beta,\gamma), \;0<\hat{N}(\alpha,\beta,\gamma)\le N$
which can be parametrized by $\varphi, t \in \mathbb{R}$ in the form
\begin{eqnarray*}
x & = & x_{0}(\beta t+\vartheta_{j})+\frac{\gamma}{\alpha}X_{1j}
\left(\varphi,\beta t,\alpha t,\frac{1}{\alpha},\beta,\frac{\gamma}{\alpha}\right)+
\frac{1}{\alpha}X_{2j}\left(\varphi,\beta t,\alpha t,\frac{1}{\alpha},\beta,\frac{\gamma}{\alpha}\right),\label{xm}\\[2mm]
y & = &
r_{0}(\beta t+\vartheta_{j})e^{i\varphi + \phi(\beta t+\vartheta_{j})} \\[2mm]
& & \hspace{15mm} +
\frac{\gamma}{\alpha}Y_{1j}\left(\varphi,\beta t,\alpha t,\frac{1}{\alpha},\beta,
\frac{\gamma}{\alpha}\right)+\frac{1}{\alpha}Y_{2j}\left(\varphi,\beta t,\alpha t,
\frac{1}{\alpha},\beta,\frac{\gamma}{\alpha}\right),\label{ym}
\end{eqnarray*}
where $\vartheta_{j}$ are constants, and the functions
$X_{kj},Y_{kj}: \mathbb{R}^4 \times V \to \mathbb{R}$
are $C^{l-4}$-smooth and
$2\pi$-periodic with respect to $\varphi,\beta t$ and
$\alpha t$, and
$$
V:=\left\{(\nu,\beta,\mu):G_-<\mu^2(\beta-\beta_0)<G_+,
\mu^*\nu^2< \mu<\mu_*, \mbox{\rm dist}(\mu^2(\beta-\beta_{0}),S)>\varepsilon
\right\}.
$$

(ii) The dynamics of (\ref{01})--(\ref{02}) on  $\mathfrak{N}_j(\alpha,\beta,\gamma)$ in coordinates $\varphi$ and $t$ is
determined by an equation of the type
\begin{eqnarray*}
\frac{d\varphi}{dt}=\alpha_{0}
+\frac{\gamma^{2}}{\alpha^{3}}
\Phi_{1j}\left(\varphi,\beta t,\alpha t,\frac{1}{\alpha},\beta,\frac{\gamma}{\alpha}\right)+\frac{\gamma}{\alpha^{3}}\Phi_{2j}
\left(\varphi,\beta t,\alpha t,\frac{1}{\alpha},\beta,\frac{\gamma}{\alpha}\right),
\end{eqnarray*}
where the functions $\Phi_{kj}:\mathbb{R}^3 \times V \to \mathbb{R}$ are $C^{l-4}$ smooth
and $2\pi$-periodic in $\varphi,\beta t$ and $\alpha t$.

(iii) Any solution $(x(t),y(t))$ to  (\ref{01})--(\ref{02}) such that
$\mbox{\rm dist}((x(t_{0}),y(t_{0})),\mathcal{T}_{2})<\delta$ for certain
$t_{0} \in \mathbb{R}$ tends to one of the manifolds $\mathfrak{N}_{j}(\alpha,\beta,\gamma)$
as $t\to \infty.$
\end{theorem}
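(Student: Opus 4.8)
The plan is to descend to the three-dimensional manifold $\mathfrak{M}(\alpha,\beta,\gamma)$ furnished by Theorem~\ref{theorem01} and to analyse the reduced flow (\ref{eq:psi})--(\ref{eq:varphi}), exploiting that $\beta\approx\beta_0$ puts $\psi$ and the forcing phase $\beta t$ into a $1{:}1$ resonance. All dependence in (\ref{eq:psi}) on $\varphi$ and on the fast phase $\alpha t$ sits in remainders of order $\gamma^4/\alpha^4$, $\gamma^2/\alpha^3$ and $\gamma/\alpha^3$, so the principal balance is governed by the $\varphi$-independent term $\tfrac{\gamma^2}{\alpha^2}p^{T}(\psi){\mathcal G}(x_0(\psi),\beta t)$. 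I would first introduce the slow phase mismatch $\theta:=\psi-\beta t$, in which (\ref{eq:psi}) becomes
\[
\frac{d\theta}{dt}=(\beta_0-\beta)+\frac{\gamma^2}{\alpha^2}\,p^{T}(\theta+\beta t)\,{\mathcal G}(x_0(\theta+\beta t),\beta t)+\text{h.o.t.};
\]
by (\ref{eq:cond2}) the detuning $\beta-\beta_0$ is itself of order $\gamma^2/\alpha^2$, so the whole right-hand side is $O(\gamma^2/\alpha^2)$ and $\theta$ is a genuinely slow variable.

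Next I would average the $O(\gamma^2/\alpha^2)$ term over one period of the fast phase $\beta t$, which yields precisely the averaged coupling function $G$ introduced above, and the averaged phase equation $\dot{\bar\theta}=(\beta_0-\beta)+\tfrac{\gamma^2}{\alpha^2}G(\bar\theta)$. Hence the locked regimes correspond to the roots of
\[
G(\vartheta)=\frac{\alpha^2}{\gamma^2}(\beta-\beta_0).
\]
Condition (\ref{eq:cond2}) places the right-hand side strictly between $G_-$ and $G_+$, so at least one ascending and one descending root exist; condition (\ref{eq:cond3}) keeps it at distance $>\varepsilon$ from the singular values $S$, making it a regular value of $G$, and (\ref{nondeg}) then guarantees that every root is simple. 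A $2\pi$-periodic function with $2N$ non-degenerate critical points has $2N$ monotone arcs and meets a regular level at most once per arc, in alternating directions; this yields an even number $2\hat N$ of simple roots $\vartheta_1,\dots,\vartheta_{2\hat N}$ with $0<\hat N\le N$, at which $G'(\vartheta_j)$ alternates in sign. These are precisely the hyperbolic equilibria of the averaged phase flow on the circle, alternately attracting and repelling.

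It then remains to lift each hyperbolic equilibrium $\vartheta_j$ back to the full non-autonomous system. Here condition (\ref{eq:cond1}) is decisive: it forces $\gamma/\alpha<\mu_*$, $\gamma\alpha>\mu^*$ and $\alpha$ large, so that the principal term $\tfrac{\gamma^2}{\alpha^2}G$ dominates each remainder in (\ref{eq:psi}) and the averaging error is of higher order than the spectral gap $|G'(\vartheta_j)|$ measuring hyperbolicity. Applying the integral-manifold theory of \cite{Samoilenko1991,Samoilenko2005} to the phase equation, I would obtain for each $j$ a solution $\psi=\psi_j(t)=\beta t+\vartheta_j+O(\gamma^2/\alpha^2)$ that is exponentially stable or unstable according to the sign of $G'(\vartheta_j)$. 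Since the leading $\psi$-dynamics does not involve $\varphi$, the corresponding subset of $\mathfrak{M}(\alpha,\beta,\gamma)$ is the graph $\psi=\psi_j(t)$ with $\varphi$ and $t$ free, i.e.\ a two-dimensional integral manifold $\mathfrak{N}_j(\alpha,\beta,\gamma)$ of the stated form, with $\vartheta_j$ the constant in $x_0(\beta t+\vartheta_j)$. Substituting $\psi=\psi_j(t)$ into (\ref{eq:varphi}) and removing the resulting $O(\gamma^2/\alpha^2)$ $\beta t$-periodic term by a near-identity change of $\varphi$ produces the reduced equation of part~(ii). Finally part~(iii) follows from the gradient-like nature of the scalar phase flow—on the circle every trajectory tends to an equilibrium—so, by Theorem~\ref{theorem01}(iii), any solution starting $\delta$-close to $\mathcal{T}_2$ is exponentially captured by $\mathfrak{M}$ and then converges to one of the $\mathfrak{N}_j$ as $t\to\infty$.

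The main obstacle is this final persistence step carried out \emph{uniformly} over the entire parameter set $V$. The hyperbolicity available is only of size $\gamma^2/\alpha^2$, while the system simultaneously carries the genuinely fast frequency $\alpha$ and the neutral $\varphi$-direction; one must therefore show that the averaging remainder together with the $\alpha t$- and $\varphi$-dependent couplings are dominated by this small gap precisely in the regime (\ref{eq:cond1}), with the constants $\mu^*,\mu_*,\delta$ chosen independently of $(\alpha,\beta,\gamma)$. Securing these uniform quantitative estimates—rather than the qualitative bifurcation picture, which is elementary—is where the bulk of the work lies.
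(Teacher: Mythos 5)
Your proposal follows essentially the same route as the paper's proof: reduction to the flow (\ref{eq:psi})--(\ref{eq:varphi}) on $\mathfrak{M}(\alpha,\beta,\gamma)$, passage to the slow phase mismatch $\psi-\beta t$ together with the scaling $\beta-\beta_0=\mu^2\Delta$, averaging over the forcing phase to produce $G$, identification of the locked states as the simple roots of $G(\vartheta)=\frac{\alpha^2}{\gamma^2}(\beta-\beta_0)$ (an even number, alternately attracting and repelling), persistence of the corresponding hyperbolic invariant two-dimensional manifolds (the paper's Lemma \ref{lemma4}, proved via the time rescaling $\tau=\mu^2t$, parameter extension and a fiber-contraction argument), and finally capture of all remaining trajectories on $\mathfrak{M}$. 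The two issues you flag as the remaining work are resolved in the paper exactly along the lines you anticipate: the uniformity of $\mu^*,\mu_*,\delta$ over the locking region is obtained by a compactness argument yielding a uniform lower bound $|G'(\theta)|\ge\varsigma$ on the set of phases whose $G$-values stay at distance $\varepsilon$ from $S$, and your ``gradient-like'' convergence claim for the non-autonomous perturbed phase equation is made rigorous by combining the exponential repulsion estimate near the unstable manifolds $\Pi_{2k-1}$, the finite-transit-time Lemma \ref{lemma7}, and the exponential attraction estimate near the stable manifolds $\Pi_{2k}$.
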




\begin{theorem}
\label{thm:main}
Assume that (\ref{>0}), (\ref{ass}) and (\ref{nondeg}) hold.

Then for any $\varepsilon>0$ and $\varepsilon_{1}>0$
there exist positive $\mu^{*}$, $\mu_{*}$ and $\delta$
such that for all parameters $(\alpha,\beta,\gamma)$ satisfying the
conditions (\ref{eq:cond1})--(\ref{eq:cond3})
and for any solution $(x(t),y(t))$ of system (\ref{01})--(\ref{02})
 such that
$\mbox{\rm dist}((x(t_{0}),y(t_{0})),\mathcal{T}_{2})<\delta$ for certain
$t_{0} \in \mathbb{R}$ there exist $\sigma, T \in \mathbb{R}$ such that
$$
\| x(t)-x_{0}(\beta t+\sigma)\|+\bigl||y(t)|-|y_{0}(\beta t+\sigma)|\bigr|<\varepsilon_{1}
\mbox{ for all } t>T.
$$
\end{theorem}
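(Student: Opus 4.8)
The plan is to read this statement as the geometric pay-off of Theorem \ref{theorem02} and to derive it essentially as a corollary, the only real work being uniform estimates. First I would fix $\varepsilon>0$ and the given $\varepsilon_1>0$, apply Theorem \ref{theorem02} to obtain constants $\mu^{*}_0,\mu_{*,0},\delta_0$, and then reserve the right to enlarge $\mu^{*}\ge\mu^{*}_0$, shrink $\mu_{*}\le\mu_{*,0}$ and $\delta\le\delta_0$ afterwards: doing so only shrinks the admissible region (\ref{eq:cond1})--(\ref{eq:cond3}), so all conclusions of Theorem \ref{theorem02} persist on the smaller region. For a solution $(x(t),y(t))$ with $\mathrm{dist}((x(t_0),y(t_0)),\mathcal{T}_2)<\delta$, part (iii) of Theorem \ref{theorem02} guarantees convergence to exactly one submanifold $\mathfrak{N}_j(\alpha,\beta,\gamma)$, and the natural candidate for the locking phase is $\sigma:=\vartheta_j$, the constant in the parametrization of that $\mathfrak{N}_j$ in part (i).

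Next I would convert the convergence into the two scalar estimates. For $t$ beyond some $T$ the distance from $(x(t),y(t))$ to the time-$t$ slice of $\mathfrak{N}_j$ is below any prescribed $\varepsilon'$, so there is a $\varphi$ with $\|(x(t),y(t))-P_j(\varphi,t)\|<\varepsilon'$, where $P_j$ denotes the parametrization of Theorem \ref{theorem02}(i). Using the triangle inequality together with the explicit leading term $x_0(\beta t+\vartheta_j)$ of the $x$-component of $P_j$, I obtain
\[
\|x(t)-x_0(\beta t+\sigma)\|\le \varepsilon'+\frac{\gamma}{\alpha}\|X_{1j}\|_{\infty}+\frac{1}{\alpha}\|X_{2j}\|_{\infty},
\]
the sup-norms being taken over the compact torus of the fast variables $\varphi,\beta t,\alpha t$. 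For the field I would combine the same inequality with the reverse triangle inequality $\bigl||a|-|b|\bigr|\le|a-b|$ and the identity $|y_0(\psi)|=r_0(\psi)$: the modulus of the leading term $r_0(\beta t+\vartheta_j)e^{i(\varphi+\phi(\beta t+\vartheta_j))}$ equals $r_0(\beta t+\vartheta_j)=|y_0(\beta t+\sigma)|$, whence
\[
\bigl||y(t)|-|y_0(\beta t+\sigma)|\bigr|\le \varepsilon'+\frac{\gamma}{\alpha}\|Y_{1j}\|_{\infty}+\frac{1}{\alpha}\|Y_{2j}\|_{\infty}.
\]

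It then remains to make the four correction contributions uniformly small. Along (\ref{eq:cond1}) one has $\gamma/\alpha<\mu_{*}$, and combining $\gamma>\mu^{*}/\alpha$ with $\gamma<\mu_{*}\alpha$ gives $\alpha^{2}>\mu^{*}/\mu_{*}$, i.e.\ $1/\alpha<\sqrt{\mu_{*}/\mu^{*}}$. Hence taking $\mu_{*}$ small (and $\mu^{*}$ not too small) drives both prefactors $\gamma/\alpha$ and $1/\alpha$ below any threshold, uniformly over the parameter region, while keeping that region nonempty. Choosing $\varepsilon'<\varepsilon_1/4$ (which fixes $T$ through Theorem \ref{theorem02}(iii)) and then $\mu_{*},\mu^{*}$ so that the correction terms sum to less than $\varepsilon_1/2$ yields the claim with $\sigma=\vartheta_j$ for all $t>T$.

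The step I expect to be the main obstacle is the \emph{uniformity} of the bounds $\|X_{kj}\|_{\infty},\|Y_{kj}\|_{\infty}$ over the noncompact parameter set $V$: the functions are only asserted to be $C^{l-4}$ and periodic in the fast variables, so I would have to extract from the construction in the proof of Theorem \ref{theorem02} that they remain bounded as $(\nu,\beta,\mu)$ ranges over all of $V$, rather than merely for each fixed parameter triple. The accompanying bookkeeping obstacle is to check that simultaneously enlarging $\mu^{*}$ and shrinking $\mu_{*}$ is consistent with all three conditions (\ref{eq:cond1})--(\ref{eq:cond3}) and still leaves admissible $(\alpha,\beta,\gamma)$; this is precisely where the tension between the lower bound $\gamma>\mu^{*}/\alpha$ (forcing $\alpha$ large) and the upper bound $\gamma<\mu_{*}\alpha$ (forcing $\gamma/\alpha$ small) must be resolved carefully.
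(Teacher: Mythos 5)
Your proposal is correct and takes essentially the same route as the paper: the paper's own proof of Theorem \ref{thm:main} simply invokes Theorem \ref{theorem02} to conclude that any solution starting $\delta$-close to $\mathcal{T}_{2}$ is attracted to some $\mathfrak{N}_{j}(\alpha,\beta,\gamma)$, and then sets $\sigma=\vartheta_{j}^{0}$, exactly as you do. The uniformity of the bounds on the correction terms that you flag as the main obstacle is indeed available from the construction (Lemma \ref{lemma4} gives $\left\Vert v_{jk}\right\Vert _{C^{l-4}}\le M_{3}$ with $M_{3}$ independent of $\alpha,\mu,\varepsilon$), so your triangle-inequality bookkeeping with $\gamma/\alpha<\mu_{*}$ and $1/\alpha<\sqrt{\mu_{*}/\mu^{*}}$ merely makes explicit what the paper leaves implicit.
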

\vspace{3mm}

The conditions (\ref{eq:cond1})--(\ref{eq:cond3}) from Theorems \ref{theorem02}
and \ref{thm:main} determine the so-called locking region, i.e. the set of all triples
$(\alpha,\beta,\gamma)$ for which  modulation frequency locking
takes place. These domains are illustrated in the figures \ref{fig:Graphs}--\ref{fig:crossing}.

\begin{figure}
\begin{center}
\includegraphics[width=0.85\linewidth]{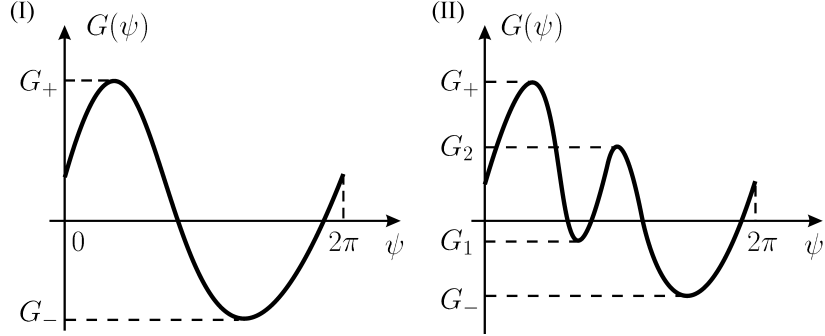}
\end{center}
\caption{\label{fig:Graphs}
Graphs of the function $G$.
}
\end{figure}

In Fig.~\ref{fig:Graphs} we show two typical cases of graphs of the function $G$. In the case (I) there exist one
positive and one
negative local
extremum and in the case (II) two positive and two negative local extrema, i.e.,
\begin{eqnarray*}
\mbox{(I) }: && \ N=1, \; S=\{G_-,G_+\}, \; G_-<0<G_+,\\
\mbox{(II)}: && \ N=2, \; S=\{G_-,G_1,G_2,G_+\}, \; G_-<G_1<0<G_2<G_+.
\end{eqnarray*}

\begin{figure}
\begin{center}
\includegraphics[width=0.85\linewidth]{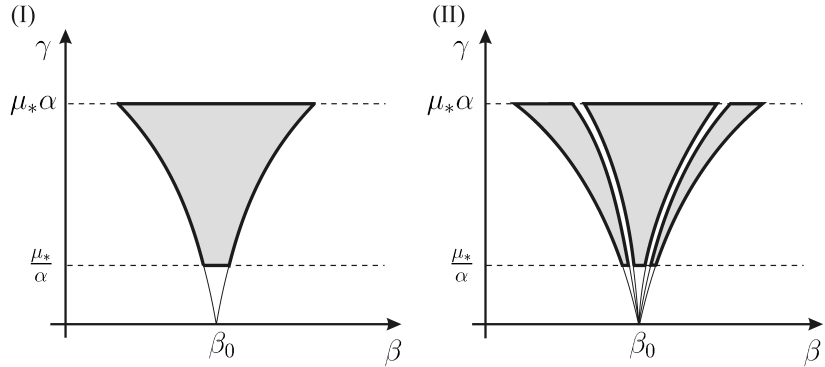}
\end{center}
\caption{\label{fig:alpha=const}
Cross-sections of the locking region $\alpha={\mathrm const}$.
}
\end{figure}

In Fig.~\ref{fig:alpha=const} we show $\alpha$=const sections of the locking region.
In the case (I) this  section is
$$
\left\{(\beta,\gamma):\;\frac{\mu^*}{\alpha}<\gamma<\mu_* \alpha, \;
G_-+\varepsilon<
\frac{\alpha^2}{\gamma^2}(\beta-\beta_{0})
<G_+-\varepsilon\right\}.
$$
It is bounded by two straight lines $\gamma=\mu^*/\alpha$ and  $\gamma=\mu_* \alpha$ and by two square root like curves
$$
\gamma=\alpha \sqrt{\frac{\beta-\beta_0}{\tilde{G}}} \mbox{ with } \tilde{G} \in \{G_-+\varepsilon,G_+ -\varepsilon\}.
$$
In the case (II) the $\alpha$=const  section is bounded by the same two horizontal  straight lines and by six  square root like curves
$$
\gamma=\alpha \sqrt{\frac{\beta-\beta_0}{\tilde{G}}} \mbox{ with } \tilde{G} \in \{G_-+\varepsilon,
G_1-\varepsilon, G_1+\varepsilon,G_2-\varepsilon,G_2+\varepsilon,G_+-\varepsilon\}.
$$

\begin{figure}
\begin{center}
\includegraphics[width=0.85\linewidth]{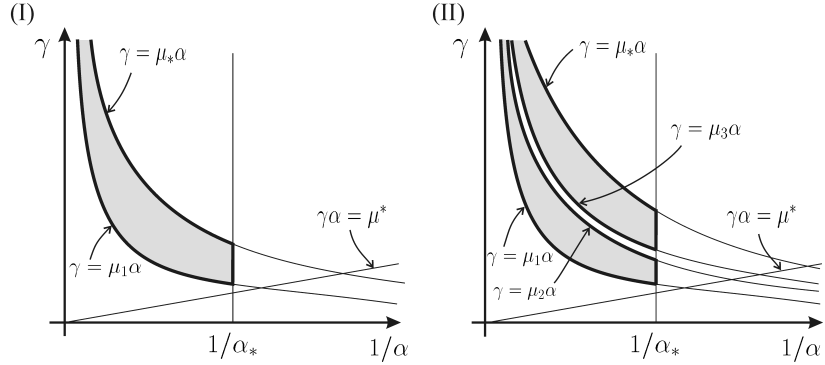}
\end{center}
\caption{\label{fig:beta=const}
Cross-sections of the locking region $\beta={\mathrm{const}}$.
}
\end{figure}

Finally, in Fig.~\ref{fig:beta=const} we show $\beta$=const sections of the locking region in the
$(1/\alpha,\gamma)$ plane.
We consider the parameter $\alpha$ in the region $\alpha > \alpha_*$ with sufficiently large  $\alpha_*>0$
\begin{equation}
\label{1new}
\alpha_*^2>\frac{\mu^*}{\mu_*}\sqrt{\frac{G_+-\varepsilon}{G_2-\varepsilon}}.
\end{equation}
If (\ref{1new}) is satisfied, consider the set of all $\beta > \beta_0$ such that
\begin{equation}
\label{2new}
\frac{\mu^*}{\alpha_*}<\alpha_*\sqrt{\frac{\beta-\beta_0}{G_+-\varepsilon}} \mbox{ and }
\sqrt{\frac{\beta-\beta_0}{G_2-\varepsilon}}<\mu_*.
\end{equation}
For any fixed $\alpha > \alpha_*$, where $\alpha_*$ satisfies (\ref{1new}),
and for  any fixed $\beta > \beta_0$
with (\ref{2new}),
the line $\{(\alpha,\beta,\gamma): \gamma \in \mathbb{R}\}$ crosses the boundary
of the locking region in two points $\gamma=\mu_1 \alpha$ and $\gamma=\mu_* \alpha$ in case (I) and in four  points
$\gamma=\mu_1 \alpha$, $\gamma=\mu_2 \alpha$, $\gamma=\mu_3 \alpha$   and $\gamma=\mu_* \alpha$  in case (II)
(see also Fig.~\ref{fig:crossing}).
Here we denoted
$$
\mu_1=\sqrt{\frac{\beta-\beta_0}{G_+-\varepsilon}},\;
\mu_2=\sqrt{\frac{\beta-\beta_0}{G_2+\varepsilon}},\;
\mu_3=\sqrt{\frac{\beta-\beta_0}{G_2-\varepsilon}}.
$$

\begin{figure}
\begin{center}
\includegraphics[width=0.85\linewidth]{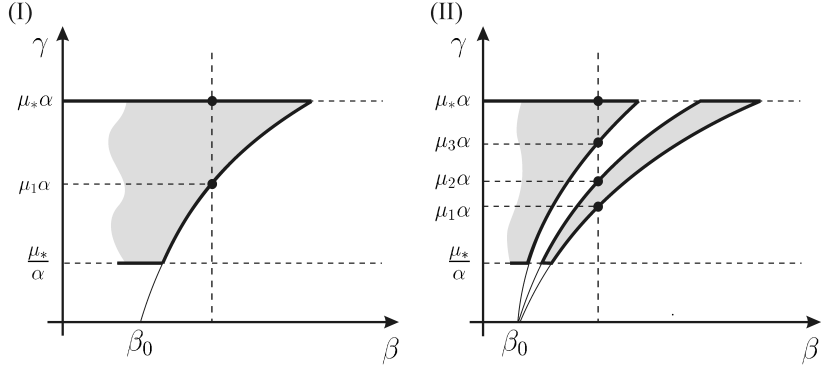}
\end{center}
\caption{\label{fig:crossing}
Intersection of a line $\beta={\mathrm{const}}$ with the boundary of the
locking region.
}
\end{figure}


\section{\textbf{Averaging\label{sec:Averaging}} }

In this section we perform changes of variables with the aim to average
the nonautonomous terms with fast oscillating arguments $\alpha t$.
 As the result of these transformations, we obtain an equivalent
system, where the fast oscillating terms have the order of magnitude
of $\gamma^{2}/\alpha^{2}$ and smaller. The principles and details
of the averaging procedure can be found e.g. in \cite{Bogoliubov1961}.

Performing the change of variables \begin{eqnarray}
x & = & x_{1},\nonumber \\
y & = & y_{1}-i\frac{\gamma}{\alpha}e^{i\alpha t}a(\beta t) \nonumber
\end{eqnarray}
in (\ref{01})--(\ref{02}), we obtain the transformed system
\begin{eqnarray}
& & \frac{dx_{1}}{dt} = f(x_{1})+g(x_{1})|y_{1}|^{2}+\frac{\gamma^{2}}{\alpha^{2}}g(x_{1})|a(\beta t)|^{2}-\frac{2\gamma}{\alpha}g(x_{1})\Im\{y_{1}e^{-i\alpha t}a^{*}(\beta t)\}, \quad
\label{averaging000}\\
& & \frac{dy_{1}}{dt} = h(x_{1})y_{1}-i\frac{\gamma}{\alpha}e^{i\alpha t}\left(h(x_{1})a(\beta t)-\beta\frac{da}{dt}(\beta t)\right),
\label{averaging001}
\end{eqnarray}
where $*$ denotes complex conjugation. In system (\ref{averaging000})--(\ref{averaging001}),
the fast oscillatory terms with frequency $\alpha t$ are now proportional to
$\gamma/\alpha$. Since the first averaging has not produced any nontrivial
contributions on the zeroth order, the second averaging transformation is necessary:

\begin{eqnarray*}
& & x_{1}  =  x_{2}-2\frac{\gamma}{\alpha^{2}}g(x_{1})\Re\{y_{1}e^{-i\alpha t}a^{*}(\beta t)\},\\
& & y_{1}  =  y_{2}-\frac{\gamma}{\alpha^{2}}e^{i\alpha t}\left(h(x_{1})a(\beta t)-\beta\frac{da}{dt}(\beta t)\right),
\end{eqnarray*}
which  allows eliminating fast oscillating terms of order $\gamma/\alpha$.
 \begin{eqnarray*}
\frac{dx_{2}}{dt} &  = & f(x_{2})+g(x_{2})|y_{2}|^{2}+\frac{\gamma^{2}}{\alpha^{2}}g(x_{2})|a(\beta t)|^{2}\\
 & + & 2\frac{\gamma}{\alpha^{2}}\left(\frac{dg(x_{2})}{dx_{2}}f(x_{2})-\frac{df(x_{2})}{dx_{2}}g(x_{2})\right)\Re\{y_{2}e^{-i\alpha t}a^{*}(\beta t)\}\\
 & + & 2\frac{\gamma}{\alpha^{2}}g(x_{2})\Re\left\{ e^{i\alpha t}(h^{*}(x_{2})a(\beta t)+2\beta\frac{da}{dt}(\beta t)-h(x_{2})a(\beta t))\right\} \\
 & + & \frac{\gamma^{2}}{\alpha^{3}}r_{1}(x_{2},y_{2},\alpha t,\beta t,\frac{\gamma}{\alpha},\frac{1}{\alpha}),\\[3mm]
\frac{dy_{2}}{dt} & = & h(x_{2})y_{2}+ \frac{\gamma}{\alpha^{2}}e^{i\alpha t}\biggl( 2\beta h(x_{2})\frac{da}{dt}(\beta t)-h^2(x_2)a(\beta t)\\
 & -  & \beta^{2}\frac{d^{2}a}{dt^{2}}(\beta t) + \frac{dh(x_{2})}{dx_{2}}(f(x_{2})+g(x_{2})|y_{2}|^{2})a(\beta t)\biggl) \\
 & - & 2\frac{\gamma}{\alpha^{2}}g(x_{2})\frac{dh(x_{2})}{dx_{2}}\Re\{y_{2}e^{-i\alpha t}a^{*}(\beta t)\}
+ \frac{\gamma^{2}}{\alpha^{3}}r_{2}(x_{2},y_{2},e^{i\alpha t},\beta t,\frac{\gamma}{\alpha},\frac{1}{\alpha}),\\
\end{eqnarray*}
where the remainder terms $r_{1},r_{2}$ are $C^{l-2}$ smooth functions
in all arguments and $2\pi$-periodic in $\alpha t$ and in $\beta t$.

Again, the
second transformation has not produced any nontrivial contributions of the order
$1/\alpha$. Let us perform the third change of variables \begin{eqnarray*}
x_{2} & = & x_{3}-2\frac{\gamma}{\alpha^{3}}\left(\frac{dg(x_{2})}{dx_{2}}f(x_{2})-\frac{df(x_{2})}{dx_{2}}g(x_{2})\right)\Im\{y_{2}e^{-i\alpha t}a^{*}(\beta t)\}\\
 & + & 2\frac{\gamma}{\alpha^{3}}g(x_{2})\Im\{e^{i\alpha t}(h^{*}(x_{2})a(\beta t)+2\beta\frac{da}{dt}(\beta t)-h(x_{2})a(\beta t))\},\\[3mm]
y_{2} & = & y_{3}-i\frac{\gamma}{\alpha^{3}}e^{i\alpha t}\biggl(2h(x_{2})\beta\frac{da}{dt}(\beta t)-h^2(x_{2})a(\beta t)-\beta^{2}\frac{d^{2}a}{dt^{2}}(\beta t)\\
 & + & \frac{dh(x_{2})}{dx_{2}}(f(x_{2})+g(x_{2})|y_{2}|^{2})a(\beta t)\biggr)
  +  2\frac{\gamma}{\alpha^{3}}\frac{dh(x_{2})}{dx_{2}}g(x_{2})\Im\{y_{2}e^{-i\alpha t}a^{*}(\beta t)\},\end{eqnarray*}
which transforms the system to the following form:
\begin{eqnarray}
\frac{dx_{3}}{dt} & = & f(x_{3})+g(x_{3})|y_{3}|^{2}+\frac{\gamma^{2}}{\alpha^{2}}g(x_{3})|a(\beta t)|^{2}\nonumber \\
 & + & \frac{\gamma}{\alpha^{3}}r_{3}\left(x_{3},y_{3},\alpha t,\beta t,\frac{\gamma}{\alpha},\frac{1}{\alpha}\right)+\frac{\gamma^{2}}{\alpha^{3}}r_{4}\left(x_{3},y_{3},\alpha t,\beta t,\frac{\gamma}{\alpha},\frac{1}{\alpha}\right),\label{eq:AV}\\[3mm]
\frac{dy_{3}}{dt} & = & h(x_{3})y_{3}\nonumber \\
 & + & \frac{\gamma}{\alpha^{3}}r_{5}\left(x_{3},y_{3},\alpha t,\beta t,\frac{\gamma}{\alpha},\frac{1}{\alpha}\right)+\frac{\gamma^{2}}{\alpha^{3}}r_{6}\left(x_{3},y_{3},\alpha t,\beta t,\frac{\gamma}{\alpha},\frac{1}{\alpha}\right),\label{eq:AV1}
\end{eqnarray}
where the remainder terms $r_{3},...,r_{6}$ are $2\pi$-periodic in $\alpha t$
and in $\beta t$, of class $C^{l-3}$ in all variables.
The obtained system (\ref{eq:AV})--(\ref{eq:AV1}) contains a nontrivial contribution
of the order $\gamma^2/\alpha^2$ and all fast oscillatory terms of the orders
$\gamma/\alpha^3$,  $\gamma^2/\alpha^3$ and smaller. The next section proceeds with the analysis
of the averaged system (\ref{eq:AV})--(\ref{eq:AV1}).

\section{\textbf{Local coordinates.} \label{sec:Local-coordinates}}

Let us introduce two new parameters
$$
\mu:=\frac{\gamma}{\alpha},\quad\varepsilon:=\frac{1}{\alpha}.
$$
We assume that $\mu\in(0,\mu_{0})$ and $\varepsilon\in(0,\varepsilon_{0})$
with some sufficiently small $\mu_{0}>0,$ $\varepsilon_{0}>0$. The
system (\ref{eq:AV})--(\ref{eq:AV1}) can be re-written as \begin{eqnarray}
\frac{dx_{3}}{dt} & = & f(x_{3})+g(x_{3})|y_{3}|^{2}+\mu^{2}g(x_{3})|a(\beta t)|^{2}\label{1}\\[2mm]
 &  & \qquad+\varepsilon^{2}\mu r_{3}(x_{3},y_{3},\beta t,\alpha t,\mu,\varepsilon)+\varepsilon\mu^{2}r_{4}(x_{3},y_{3},\beta t,\alpha t,\mu,\varepsilon),\nonumber \\
\frac{dy_{3}}{dt} & = & h(x_{3})y_{3}+\varepsilon^{2}\mu r_{5}(x_{3},y_{3},\beta t,\alpha t,\mu,\varepsilon)+\varepsilon\mu^{2}r_{6}(x_{3},y_{3},\beta t,\alpha t,\mu,\varepsilon).\label{2}\end{eqnarray}

After the change of variables \begin{equation}
y_{3}=re^{i\theta},\label{eq:polar}\end{equation}
in polar coordinates $(r,\theta)$ the system (\ref{1})--(\ref{2})
takes the form \begin{eqnarray}
\frac{dx_{3}}{dt} & = & f(x_{3})+g(x_{3})r^{2}+\mu^{2}g(x_{3})|a(\beta t)|^{2}+\varepsilon\mu^{2}f_{1}+\varepsilon^{2}\mu f_{2}, \label{pc1}\\[2mm]
\frac{dr}{dt} & = & \Re h(x_{3})r+\varepsilon\mu^{2}f_{3}+\varepsilon^{2}\mu f_{4},\label{pc2}\\[2mm]
\frac{d\theta}{dt} & = & \Im h(x_{3})+\varepsilon\mu^{2}f_{5}+\varepsilon^{2}\mu f_{6},\label{pc3}\end{eqnarray}
where $f_{j}=f_{j}(x_{3},r,\theta,\beta t,\alpha t,\mu,\varepsilon)$,
$j=1,\dots,6,$ are $C^{l-3}$-smooth and $2\pi$-periodic in $\theta,\beta t$,
$\alpha t$ functions. Here we assume $r\ge r_{*}=\frac{1}{2}\min_{\psi}|y_{0}(\psi)|>0$.

By substituting $z=(x_{3},r),$ system (\ref{pc1}) - (\ref{pc3})
takes the following form
\begin{eqnarray}
 &  & \frac{dz}{dt}=F(z)+\mu^{2}{\mathcal G}(z,\beta t)+\varepsilon\mu^{2}F_{1}+\varepsilon^{2}\mu F_{2},\label{f1}\\[3mm]
 &  & \frac{d\theta}{dt}=h_{2}(z)+\varepsilon\mu^{2}F_{3}+\varepsilon^{2}\mu F_{4},\label{f2}
\end{eqnarray}
 where the functions $h_2$, $F$, and $\mathcal{G}$ are defined by
 $h_{2}(z):=\Im h(x_{3})$,
\[
F(z):=\left[\begin{array}{c}
f(x_{3})+g(x_{3})r^{2}\\
\Re h(x_{3})r\end{array}\right],\quad \mathcal{G}(z,\beta t):=\left[\begin{array}{c}
g(x_{3})|a(\beta t)|^{2}\\
0\end{array}\right],\]
$F_{j}=F_{j}(z,\theta,\beta t,\alpha t,\mu,\varepsilon)$, $j=1,\dots,4$
are $C^{l-3}$-smooth and $2\pi$-periodic in $\theta,\beta t$ and
$\alpha t$ functions. The above defined function $\mathcal{G}$,
which is defined for $z=(r,x_3)\in\mathbb{R}^{n+1}$, on the subspace
$r=0$, i.e. of all vectors $(0,x_3)$, is just the function $\mathcal{G}$
defined in (\ref{mathcalG}). Therefore, the use of the same notations should not
lead to misunderstanding.

Equation
$$
\frac{dz}{dt}  =  F(z)
$$
has the periodic solution $z(t)=z_{0}(\beta_{0}t)=(x_{0}(\beta_{0}t),r_{0}(\beta_{0}t))$
and the corresponding limit cycle in $\mathbb{R}^{n+1}$ is $z=z_{0}(\psi),\ \psi\in\mathbb{T}_{1},$
i.e.,
\begin{equation} \label{411}
\frac{dz_{0}(\psi)}{d\psi}=\frac{F(z_{0}(\psi))}{\beta_{0}},\ \psi\in\mathbb{T}_{1}.
\end{equation}
Let $\Omega_{1}(\psi)$ be the fundamental matrix solution for the
variational equation \begin{eqnarray}
\frac{d\delta z}{d\psi} & = & \frac{1}{\beta_{0}}\frac{\partial F(z_{0}(\psi))}{\partial z}\delta z.\label{variational}\end{eqnarray}
along the periodic solution $z_{0}(\psi)$.

By the Floquet theorem, the fundamental matrix $\Omega_{1}(\psi)$
can be represented in the form \begin{eqnarray}
\Omega_{1}(\psi) & = & \Phi_{1}(\psi)e^{H_{1}\psi/\beta_{0}},\label{eq:Omega1}\end{eqnarray}
where $\Phi_{1}(\psi)$ is $4\pi$-periodic $(n+1)\times(n+1)$ real
matrix and $H_{1}$ is $(n+1)\times(n+1)$ constant real matrix.

Since $dz_{0}(\psi)/d\psi$ is a periodic solution of (\ref{variational}),
we can choose \[
\Omega_{1}(\psi)=\left[\frac{dz_{0}(\psi)}{d\psi},\Omega(\psi)\right],\quad\Phi_{1}(\psi)=\left[\frac{dz_{0}(\psi)}{d\psi},\Phi(\psi)\right],\]
where $\Omega(\psi)$ and $\Phi(\psi)$ are $(n+1)\times n$ matrices
and $H_{1}=\mathrm{diag}\{0,H\}$ with $n\times n$ constant matrix
$H.$ Since the periodic solution $z_{0}(\psi)$ is orbitally stable,
all eigenvalues of matrix $H$ have negative real parts.

Let us find the inverse matrix for $\Phi_{1}(\psi):$ \[
\Phi_{1}^{-1}(\psi)=\left(\Omega_{1}(\psi)e^{-H_{1}\psi/\beta_{0}}\right)^{-1}=e^{H_{1}\psi/\beta_{0}}\Omega_{1}^{-1}(\psi).\]

Taking into account that \begin{eqnarray}
\tilde{\Omega}_{1}^{T}(\psi)\Omega_{1}(\psi)=I,\quad\psi\in\mathbb{R},\label{product}\end{eqnarray}
where $I$ is the identity matrix and $\tilde{\Omega}_{1}(\psi)$
is the fundamental matrix solution of the adjoint system \begin{eqnarray}
\frac{dw}{d\psi}=-\left(\frac{1}{\beta_{0}}\frac{\partial F(z_{0}(\psi))}{\partial\psi}\right)^{T}w,\label{adjoint}\end{eqnarray}
we conclude that $\Omega_{1}^{-1}(\psi)=\tilde{\Omega}_{1}^{T}(\psi)$
(see \cite{Demidovich1967}). Accordingly to Floquet theorem \[
\tilde{\Omega}_{1}(\psi)=\tilde{\Phi}_{1}(\psi)e^{\tilde{H}_{1}\psi/\beta_{0}}.\]
 It follows from (\ref{eq:Omega1}) and (\ref{product}) that \[
\tilde{\Omega}_{1}(\psi)=\left(\Omega_{1}^{-1}(\psi)\right)^{T}=\left(\Phi_{1}^{-1}(\psi)\right)^{T}e^{-H_{1}^{T}\psi/\beta_{0}}.\]
 Hence
$$
\tilde{\Phi}_{1}^{T}(\psi)=\Phi_{1}^{-1}(\psi),\quad\tilde{H}_{1}^{T}=-H_{1}.
$$

Since the linear periodic system (\ref{variational}) has one nonzero
linearly independent periodic solution, the adjoint system (\ref{adjoint})
has also one nonzero linearly independent periodic solution. Then
\[
\tilde{\Omega}_{1}(\psi)=\left[p(\psi),\tilde{\Omega}(\psi)\right],\quad\tilde{\Phi}_{1}(\psi)=\left[p(\psi),\tilde{\Phi}(\psi)\right],\]
 where $p(\psi)$ is $2\pi$-periodic solution of adjoint system (\ref{adjoint})
and $\tilde{\Omega}(\psi)$ and $\tilde{\Phi}(\psi)$ are $(n+1)\times n$
matrices, $\tilde{\Phi}(\psi)$ is $4\pi$ periodic. Taking into account
(\ref{product}), we obtain that the scalar product in $\mathbb{R}^{n+1}$
of two vectors $dz_{0}(\psi)/d\psi$ and $p(\psi)$ is equal to $1$
for all $\psi\in\mathbb{T}_{1}.$

It can be verified that \[
\beta_{0}\frac{d\Phi_{1}(\psi)}{d\psi}+\Phi_{1}(\psi)H_{1}=\frac{\partial F(z_{0}(\psi))}{\partial\psi}\Phi_{1}(\psi).\]
 Then $(n+1)\times n$-matrix $\Phi(\psi)$ satisfies relation \begin{eqnarray}
\beta_{0}\frac{d\Phi(\psi)}{d\psi}+\Phi(\psi)H & = & \frac{\partial F(z_{0}(\psi))}{\partial\psi}\Phi(\psi).\label{relation1}\end{eqnarray}

We introduce new coordinates $\psi$ and $h$ instead of $z$ in the
neighborhood of the periodic solution $z_{0}$ by the formula \begin{equation}
z=z_{0}(\psi)+\Phi(\psi)h,\label{eq:zamina}\end{equation}
 where $h\in\mathbb{R}^{n}$, $\|h\|\le h_{0}$ with some $h_{0}>0$.
After substituting (\ref{eq:zamina}) into (\ref{f1}) we obtain \begin{eqnarray}
 &  & \left(\frac{dz_{0}(\psi)}{d\psi}+\frac{d\Phi(\psi)}{d\psi}h\right)\frac{d\psi}{dt}+\Phi(\psi)\frac{dh}{dt}\nonumber \\[3mm]
 &  & =F(z_{0}(\psi)+\Phi(\psi)h)+\mu^{2}\mathcal{G}(z_{0}(\psi)+\Phi(\psi)h,\beta t)\nonumber \\[3mm]
 &  & +\varepsilon\mu^{2}F_{1}(z_{0}(\psi)+\Phi(\psi)h,\theta,\beta t,\alpha t,\mu,\varepsilon))
 \nonumber \\[3mm]
 &  & +
 \varepsilon^{2}\mu F_{2}(z_{0}(\psi)+\Phi(\psi)h,\theta,\beta t,\alpha t,\mu,\varepsilon)).\label{relation2}\end{eqnarray}
 With regard for (\ref{411}) and (\ref{relation1}),
the relation (\ref{relation2}) yields \begin{eqnarray}
 &  & \left(\frac{dz_{0}(\psi)}{d\psi}+\frac{d\Phi(\psi)}{d\psi}h\right)\left(\frac{d\psi}{dt}-\beta_{0}\right)+\Phi(\psi)\left(\frac{dh}{dt}-Hh\right)\nonumber \\[3mm]
 &  & =F(z_{0}(\psi)+\Phi(\psi)h)-F(z_{0}(\psi))-\frac{\partial F(z_{0}(\psi))}{\partial\psi}\Phi(\psi)h\nonumber \\[3mm]
 &  & +\mu^{2}\mathcal{G}(z_{0}(\psi)+\Phi(\psi)h,\beta t)+\varepsilon\mu^{2}F_{1}(z_{0}(\psi)+\Phi(\psi)h,\theta,\beta t,\alpha t,\mu,\varepsilon) \nonumber \\[3mm]
 &  & +\varepsilon^{2}\mu F_{2}(z_{0}(\psi)+\Phi(\psi)h,\theta,\beta t,\alpha t,\mu,\varepsilon). \label{relation3} \end{eqnarray}

Since by our construction $\det\left[\frac{dz_{0}(\psi)}{d\psi},\Phi(\psi)\right]=\det\Phi_{1}(\psi)\neq0$
for all $\psi,$ the matrix \[
\left[\frac{dz_{0}(\psi)}{d\psi}+\frac{d\Phi(\psi)}{d\psi}h,\Phi(\psi)\right]\]
 is invertible for sufficiently small $h$. Therefore taking into
account the expansion \[
(A+B)^{-1}=A^{-1}-A^{-1}BA^{-1}+A^{-1}BA^{-1}BA^{-1}-...,\]
 we obtain for sufficiently small $h:$ \begin{eqnarray*}
 &  & \left[\frac{dz_{0}(\psi)}{d\psi}+\frac{d\Phi(\psi)}{d\psi}h,\Phi(\psi)\right]^{-1}=\left[\Phi_{1}(\psi)+\left[\frac{d\Phi(\psi)}{d\psi}h,0\right]\right]^{-1}\\[2mm]
 &  & =\Phi_{1}^{-1}(\psi)+\tilde{H}(h,\psi,\mu)=\tilde{\Phi}_{1}^{T}(\psi)+\tilde{H}(h,\psi,\mu)\\
 &  & =\left[\begin{array}{c}
p^{T}(\psi)\\
\tilde{\Phi}^{T}(\psi)\end{array}\right]+\left[\begin{array}{c}
\tilde{H}_{1}(h,\psi,\mu)\\
\tilde{H}_{2}(h,\psi,\mu)\end{array}\right],\end{eqnarray*}
 where the $C^{l-4}$-smooth function $\tilde{H}(h,\psi,\mu)=\mathcal{O}(\|h\|)$
is periodic in $\psi$.

Hence, the equation (\ref{relation3}) can be solved with respect
to the derivatives $d\psi/dt$ and $dh/dt:$
\begin{eqnarray}
 &  & \frac{dh}{dt}=Hh+\mu^{2}[\tilde{\Phi}^{T}(\psi)+\tilde{H}_{2}(h,\psi,\mu)]\mathcal{G}(z_{0}(\psi)+\Phi(\psi)h,\beta t)\nonumber \\
 &  & \qquad\qquad+[\tilde{\Phi}^{T}(\psi)+\tilde{H}_{2}(h,\psi,\mu)][F_{5}+\varepsilon\mu^{2}F_{1}+\varepsilon^{2}\mu F_{2}],\label{s1}\\
 &  & \frac{d\psi}{dt}=\beta_{0}+\mu^{2}p^{T}(\psi)\mathcal{G}(z_{0}(\psi),\beta t)+\mu^{2}\tilde{H}_{1}(h,\psi,\mu)\mathcal{G}(z_{0}(\psi),\beta t)\nonumber \\
 &  & \qquad\qquad+[p^{T}(\psi)+\tilde{H}_{1}(h,\psi,\mu)][\mu^{2}G_{1}+F_{5}+\varepsilon\mu^{2}F_{1}+\varepsilon^{2}\mu F_{2}], \label{s2}
\end{eqnarray}
 where \[
F_{5}(h,\psi,\mu)=F(z_{0}(\psi)+\Phi(\psi)h)-F(z_{0}(\psi))-\frac{\partial F(z_{0}(\psi))}{\partial\psi}\Phi(\psi)h,\]
\[
G_{1}(h,\psi,\beta t,\mu)=\mathcal{G}(z_{0}(\psi)+\Phi(\psi)h,\beta t)-\mathcal{G}(z_{0}(\psi),\beta t).\]
\[
F_{j}=F_{j}(z_{0}+\Phi(\psi)h,\theta,\beta t,\alpha t,\mu,\varepsilon),\quad j=1,2.\]

We supplement this system with equation (\ref{f2}): \begin{eqnarray}
\frac{d\theta}{dt} & = & h_{2}(z_{0}(\psi)+\Phi(\psi)h)+\varepsilon\mu^{2}F_{3}(z_{0}+\Phi(\psi)h,\theta,\beta t,\alpha t,\mu,\varepsilon)\nonumber \\
 &  & +\varepsilon^{2}\mu F_{4}(z_{0}+\Phi(\psi)h,\theta,\beta t,\alpha t,\mu,\varepsilon).\label{s3}\end{eqnarray}
 Using the equality \[
\frac{1}{2\pi}\int_{0}^{2\pi}h_{2}(z_{0}(\psi))d\psi=\alpha_{0},\]
 we replace the angular variable $\theta$ in system (\ref{s1}) --
(\ref{s3}) by $\varphi$ accordingly to the formula \[
\theta=\varphi+\frac{1}{\beta_{0}}\int^{\psi}[h_{2}(z_{0}(\xi))-\alpha_{0}]d\xi,\]
 where $\int^{\psi}$ is a certain antiderivative of the function
$h_{2}(z_{0}(\xi))-\alpha_{0}.$

As a result we obtain the following system \begin{eqnarray}
 &  & \frac{dh}{dt}=Hh+\mu^{2}R_{11}+R_{12}+\varepsilon\mu^{2}R_{13}+
\varepsilon^{2}\mu R_{14},\label{ss1}\\[3mm]
 &  & \frac{d\psi}{dt}=\beta_{0}+\mu^{2}p^{T}(\psi)\mathcal{G}(z_{0}(\psi),\beta t)
+\mu^{2}R_{21}+R_{22}+\varepsilon\mu^{2}R_{23}+\varepsilon^{2}\mu R_{24},
\label{ss2}\\[2mm]
 &  & \frac{d\varphi}{dt}=\alpha_{0}+\mu^{2}R_{31}+R_{32}+
\varepsilon\mu^{2}R_{33}+\varepsilon^{2}\mu R_{34},\label{ss3}\end{eqnarray}
where functions
\begin{eqnarray*}
R_{11} & = & R_{11}(h,\psi,\beta t,\mu)=[\tilde{\Phi}^{T}(\psi)+\tilde{H}_{2}(h,\psi,\mu)]\mathcal{G}(z_{0}(\psi)+\Phi(\psi)h,\beta t), \\[2mm]
R_{12} & = & R_{12}(h,\psi,\beta t,\mu)=[\tilde{\Phi}^{T}(\psi)+\tilde{H}_{2}(h,\psi,\mu)]F_{5}=\mathcal{O}(\|h\|^{2}), \\[2mm]
R_{21} & = & R_{21}(h,\psi,\beta t,\mu)=\tilde{H}_{1}(h,\psi,\mu)\mathcal{G}(z_{0}(\psi),\beta t), \\[2mm]
& & \hspace{25mm} + [p^{T}(\psi)+\tilde{H}_{1}(h,\psi,\mu)]G_{1}(h,\psi,\beta t,\mu)=\mathcal{O}(\|h\|) \\[2mm]
R_{22} & = &  R_{22}(h,\psi,\beta t,\mu)=[p^{T}(\psi)+\tilde{H}_{1}(h,\psi,\mu)]F_{5}(h,\psi,\beta t,\mu)=
\mathcal{O}(\|h\|^{2}), \\[2mm]
R_{31} & = & R_{31}(h,\psi,\beta t,\mu)=\frac{1}{\beta_{0}}[\alpha_{0}-
h_{2}(z_{0}(\psi))]\left(p^{T}(\psi)\mathcal{G}(z_{0}(\psi),\beta t)+R_{21}\right), \\[2mm]
R_{32} & = & R_{32}(h,\psi,\beta t,\mu)=h_{2}(z_{0}(\psi)+\Phi(\psi)h)-h_{2}(z_{0}(\psi)) \\[2mm]
& & \hspace{25mm} - \frac{1}{\beta_{0}}[h_{2}(z_{0}(\psi))-\alpha_{0}]R_{22}=\mathcal{O}(\|h\|)
\end{eqnarray*}
are $C^{l-4}$-smooth, $4\pi$-periodic in $\psi$ and $2\pi$-periodic
in $\beta t$. $R_{13},R_{14},$ $R_{23},R_{24},$ $R_{33},$ and
$R_{34}$ are $C^{l-4}$-smooth functions of $(h,\psi,\varphi,\beta t,\alpha t,\mu,\varepsilon)$,
$4\pi$-periodic in $\psi$ and $2\pi$-periodic in $\varphi\beta t,\alpha t.$

\section{Existence of the perturbed manifold} \label{sec:Existence}

Using the local coordinates introduced in the previous section, we
investigate here the existence and properties of the perturbed manifold.
In addition to the circle $\mathbb{T}_{1}=\mathbb{R}/(2\pi\mathbb{Z})$
we will use the notation $\mathbb{T}'_{1}=\mathbb{R}/(4\pi\mathbb{Z})$
for the circle of length $4\pi$ and $\mathbb{T}_{k}=\underbrace{\mathbb{T}_{1}\times\cdots\times\mathbb{T}_{1}}_{k\,\mathrm{times}}$
for $k-$dimensional torus.
\begin{lemma}
\label{lemma2} For $\mu\in[0,\mu_{0}]$ and $\varepsilon\in[0,\varepsilon_{0}]$
with sufficiently small $\mu_{0}$ and $\varepsilon_{0}$, the system
(\ref{ss1})--(\ref{ss3}) has an integral manifold
$$
\mathfrak{M}_{\mu,\varepsilon}=\{(h,\psi,\varphi,t):\, h=u(\psi,\varphi,\beta t,\alpha t,\mu,\varepsilon),\,(\psi,\varphi)\in\mathbb{T}'_{1}\times\mathbb{T}_{1},t\in\mathbb{R}\},
$$
 where the function $u$ has the form \begin{eqnarray}
& & u(\psi,\varphi,\beta t,\alpha t,\mu,\varepsilon)=\mu^{2}u_{0}(\psi,\beta t,\mu)+ \nonumber\\[2mm]
& & \hspace{15mm} + \varepsilon\mu^{2}u_{1}(\psi,\varphi,\beta t,\alpha t,\mu,\varepsilon)+
\varepsilon^{2}\mu u_{2}(\psi,\varphi,\beta t,\alpha t,\mu,\varepsilon)\label{mani00}\end{eqnarray}
with $C^{l-4}$-smooth $4\pi$-periodic in $\psi,$ $2\pi$-periodic
in $\varphi,\beta t,\alpha t$ functions $u_{0}, u_{1}$ and $u_{2}$ such
that $\|u_{j}\|_{C^{l-4}}\le M_{1},\ j = 0,1,2,$
where positive constant $M_{1}$ does not depend on $\alpha,\mu,\varepsilon.$
Here $\left\Vert \cdot\right\Vert _{C^{l-4}}$ is the norm of functions
from $C^{l-4}(\mathbb{T}'_{1}\times\mathbb{T}_{3})$ with fixed parameters
$\mu$ and $\varepsilon$.

The integral manifold $\mathfrak{M}_{\mu,\varepsilon}$ is asymptotically
stable in the following sense: there exists $\nu_{0}=\nu_{0}(\mu_{0},\varepsilon_{0})$
such that for every initial value $(h,\psi,\varphi)$ at time $\tau$
with $\|h\|\le\nu_{0}$, there exists a unique $(\psi_{0},\varphi_{0})$
such that
\begin{eqnarray}
 &  & \|N(t,\tau,h,\psi,\varphi)-N(t,\tau,u(\psi_{0},\varphi_{0},
 \beta\tau,\alpha\tau,\mu,\varepsilon),\psi_{0},\varphi_{0})\|\nonumber \\[2mm]
 &  & \hspace{10mm} \le Le^{-\kappa(t-\tau)}\|(h,\psi,\varphi)-(u(\psi_{0},\varphi_{0},
 \beta\tau,\alpha\tau,\mu,\varepsilon),\psi_{0},\varphi_{0})\|, \ t \ge \tau,
\nonumber
\end{eqnarray}
where constants $L\ge1$ and $\kappa>0$ do not depend on $\alpha,\mu,\varepsilon$.
$N(t,\tau,h,\psi,\varphi)$ is the solution of the system (\ref{ss1})
- (\ref{ss3}) with an initial value $N(\tau,\tau,h,\psi,\varphi)=(h,\psi,\varphi)$.\end{lemma}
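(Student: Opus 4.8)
The plan is to first render (\ref{ss1})--(\ref{ss3}) autonomous and then to identify the sought object $\mathfrak{M}_{\mu,\varepsilon}$ as the exponentially attracting integral manifold of a normally hyperbolic torus, in the framework of \cite{Samoilenko1991,Samoilenko2005,Bogoliubov1961}. Concretely, I introduce the two phases $\omega_1:=\beta t$ and $\omega_2:=\alpha t$ as independent angular variables and adjoin the equations $\dot\omega_1=\beta$, $\dot\omega_2=\alpha$, so that the full system lives on $\mathbb{R}^n\times\mathbb{T}'_1\times\mathbb{T}_1\times\mathbb{T}_1\times\mathbb{T}_1$ in the coordinates $(h,\psi,\varphi,\omega_1,\omega_2)$ and every right-hand side becomes autonomous. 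Writing $\theta:=(\psi,\varphi,\omega_1,\omega_2)$ and collecting all terms of the $h$-equation beyond the linear part into a single function $P$, the $h$-equation reads $\dot h=Hh+P(h,\theta,\mu,\varepsilon)$, where $P$ carries the small prefactors $\mu^2,\varepsilon\mu^2,\varepsilon^2\mu$ together with the $\mathcal{O}(\|h\|)$ and $\mathcal{O}(\|h\|^2)$ contributions of $R_{12},R_{22}$ and the like. For $\mu=\varepsilon=0$ the manifold $h=0$ is invariant and, since all eigenvalues of $H$ have negative real parts, there are fixed constants $K\ge1$, $\lambda>0$ with $\|e^{Hs}\|\le Ke^{-\lambda s}$ for $s\ge0$; the torus $h=0$ is thus uniformly exponentially attracting in the normal direction, independently of the parameters.

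I would then construct $u$ as the fixed point of the Lyapunov--Perron operator. Denoting by $\theta_s$ the flow of the angular equations with $\theta_0=\theta$, invariance of the graph $h=u(\theta)$ is equivalent to the integral identity
\begin{equation*}
u(\theta)=\int_{-\infty}^{0}e^{-Hs}\,P\bigl(u(\theta_s),\theta_s\bigr)\,ds,
\end{equation*}
the integral converging because $\|e^{-Hs}\|\le Ke^{\lambda s}$ for $s\le0$. On the Banach space of $C^{l-4}$ graphs that are $4\pi$-periodic in $\psi$ and $2\pi$-periodic in $(\varphi,\omega_1,\omega_2)$ and of small norm, this operator is a contraction once $\mu_0,\varepsilon_0$ are small enough, since the Lipschitz constant of $P$ in $h$ is $\mathcal{O}(\mu^2+\varepsilon^2\mu+\nu_0)$; its fixed point $u$ is the desired graph. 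Because the normal contraction rate $\lambda$ strictly exceeds every tangential rate (the angular dynamics are rotations, with zero exponents), there is no spectral-gap obstruction and $u$ inherits the full $C^{l-4}$ smoothness of the vector field. The a priori bound $\|u\|_{C^{l-4}}=\mathcal{O}(\mu^2+\varepsilon\mu^2+\varepsilon^2\mu)\le M_1$ is read off from the prefactors in $P$, and the decisive point is that $\lambda$, $K$ and the contraction constant depend only on $H$ and on the $\alpha,\mu,\varepsilon$-independent $C^{l-4}$ bounds of the fixed data $f,g,h,a,z_0,\Phi,p$; hence $M_1$ (and later $L,\kappa$) are independent of $\alpha,\mu,\varepsilon$.

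To reach the refined form (\ref{mani00}) I would exploit a decoupling at $\varepsilon=0$. When $\varepsilon=0$ the right-hand sides of the $h$- and $\psi$-equations reduce to $Hh+\mu^2R_{11}+R_{12}$ and $\beta_0+\mu^2p^T\mathcal{G}+\mu^2R_{21}+R_{22}$, which depend on $(h,\psi,\omega_1,\mu)$ alone; the $(h,\psi)$-subsystem therefore closes on the two-torus of angles $(\psi,\omega_1)$ and possesses an attracting manifold $h=\mu^2u_0(\psi,\omega_1,\mu)$ with $u_0$ bounded and independent of $\varphi,\omega_2$. This is exactly $u|_{\varepsilon=0}$, which both explains the first term of (\ref{mani00}) and pins down $u_0$ as the unique bounded solution of the homological equation
\begin{equation*}
\beta_0\,\partial_\psi u_0+\beta\,\partial_{\omega_1}u_0=Hu_0+R_{11}(0,\psi,\omega_1,\mu);
\end{equation*}
solvability and uniqueness follow because on every joint Fourier mode the symbol $i c\,I-H$ with $c\in\mathbb{R}$ is invertible, $H$ being Hurwitz. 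Switching on $\varepsilon$ changes the forcing only by the two terms $\varepsilon\mu^2R_{13}+\varepsilon^2\mu R_{14}$, so a second fixed-point argument applied to $w:=u-\mu^2u_0$ produces the stated split $w=\varepsilon\mu^2u_1+\varepsilon^2\mu u_2$ with $u_1,u_2$ bounded and $C^{l-4}$.

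The hard part will be the uniformity of all constants as $\alpha\to\infty$, i.e.\ as $\varepsilon=1/\alpha\to0$: the phase $\omega_2=\alpha t$ rotates arbitrarily fast and the invariance identity $\dot\psi\,\partial_\psi u+\dot\varphi\,\partial_\varphi u+\beta\,\partial_{\omega_1}u+\alpha\,\partial_{\omega_2}u=Hu+P$ contains the large operator $\alpha\,\partial_{\omega_2}$. The resolution I would make precise is that every $\omega_2$-dependent contribution to $P$ already carries a factor $\varepsilon=1/\alpha$, so that $\alpha\,\partial_{\omega_2}u=\mathcal{O}(1)$ rather than $\mathcal{O}(\alpha)$ and differentiation in $\omega_2$ never resurrects powers of $\alpha$; this keeps the $C^{l-4}$ estimate and the contraction uniform. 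Finally, the exponential attraction with uniform $L\ge1$, $\kappa>0$ follows by the standard estimate: for a nearby solution one writes the deviation of its $h$-component from $u$ along the angular flow, applies the variation-of-constants formula with the Green function $e^{H(t-\tau)}$, and closes a Gronwall inequality with rate any $\kappa\in(0,\lambda)$; the unique asymptotic phase $(\psi_0,\varphi_0)$ is the base point of the strong-stable fibre of $\mathfrak{M}_{\mu,\varepsilon}$ through the initial datum, the fibration being a standard by-product of the normal attractivity.
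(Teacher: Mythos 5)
Your strategy is the same as the paper's: adjoin the angles $\zeta_{1}=\beta t$, $\zeta_{2}=\alpha t$ to make (\ref{ss1})--(\ref{ss3}) autonomous, obtain the graph $h=u$ as the fixed point of the Lyapunov--Perron operator $w\mapsto\int_{-\infty}^{0}e^{-H\tau}\tilde Q_{1}\bigl(w(\zeta_{\tau}),\zeta_{\tau}\bigr)\,d\tau$, read off the leading term of (\ref{mani00}) from the subsystem that decouples when the $\zeta_{2}$-dependent perturbations are switched off, and prove attraction by variation of constants and Gronwall, deferring the asymptotic-phase statement to standard theory. However, one step, as you state it, would fail: the claim that this operator is a contraction \emph{on the Banach space of $C^{l-4}$ graphs}. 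Estimating $\|T(u)-T(v)\|_{C^{l-4}}$ forces you to compare the two angular flows $\zeta_{\tau}^{u}$ and $\zeta_{\tau}^{v}$ obtained by substituting $h=u$ resp.\ $h=v$ into the $(\psi,\varphi)$-equations, and the $C^{l-4}$-distance of these flows, as functions of the initial point, is controlled only through the modulus of continuity of the \emph{top} derivatives of $u$ and $v$ --- the classical loss of one derivative in invariant-manifold theory. No smallness of $\mu_{0},\varepsilon_{0}$ repairs this. Your parenthetical reason (zero tangential exponents, hence no spectral-gap obstruction) is indeed why $C^{l-4}$ smoothness is \emph{true}, but the proof must be organized as in the paper: contraction only in the Lipschitz ball $\mathcal{F}_{\eta a_{0}}$, uniform $C^{l-4}$ bounds, and then the derivative recovered as the attracting fixed point of the auxiliary fiber map $(w,\Phi)\mapsto(T(w),T^{1}(w,\Phi))$ via the fiber contraction theorem \cite{Chicone1999}, with induction on the order of differentiation.

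A second, smaller gap: your derivation of the split $u-\mu^{2}u_{0}=\varepsilon\mu^{2}u_{1}+\varepsilon^{2}\mu u_{2}$ by ``a second fixed-point argument'' is under-specified --- a single fixed-point argument yields only $u-\mu^{2}u_{0}=\mathcal{O}(\varepsilon\mu^{2}+\varepsilon^{2}\mu)$, not two separate smooth factors with exactly those prefactors. The paper obtains the split cleanly by promoting $\eta_{1}=\mu^{2}$, $\eta_{2}=\varepsilon\mu^{2}$, $\eta_{3}=\varepsilon^{2}\mu$ to \emph{independent} parameters, proving that the fixed point $w_{0}(\zeta,\lambda)$ is smooth in $\lambda$ and vanishes at $\eta_{1}=\eta_{2}=\eta_{3}=0$, and then factoring; your observation that the $\varepsilon=0$ system is independent of $(\varphi,\zeta_{2})$, which identifies $u_{0}=u_{0}(\psi,\zeta_{1},\mu)$, coincides with the paper's argument for the first term. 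Note also that the exact $u_{0}$ solves the full nonlinear invariance equation, not the linear homological equation you display, which determines it only to leading order. Your uniformity-in-$\alpha$ argument (every $\zeta_{2}$-dependent term carries a factor $\varepsilon$) differs from the paper's (which observes that $\alpha$ enters only through the shift $d\zeta_{2}/dt=\alpha$, an isometry, so none of the Lyapunov--Perron estimates see it), but both mechanisms are valid.
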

\begin{proof}
Setting $\zeta_{1}=\beta t,\zeta_{2}=\alpha t$ in the system (\ref{ss1})
- (\ref{ss3}), we obtain an autonomous system \begin{eqnarray}
 &  & \frac{dh}{dt}=Hh+Q_{1}(h,\psi,\varphi,\zeta_{1},\zeta_{2},\mu,\varepsilon),\label{ss11}\\
 &  & \frac{d\psi}{dt}=\beta_{0}+Q_{2}(h,\psi,\varphi,\zeta_{1},\zeta_{2},\mu,\varepsilon),
\label{ss22}
\\
 &  & \frac{d\varphi}{dt}=\alpha_{0}+Q_{3}(h,\psi,\varphi,\zeta_{1},\zeta_{2},\mu,\varepsilon),\label{ss33}
\\
 &  & \frac{d\zeta_{1}}{dt}=\beta,\ \ \frac{d\zeta_{2}}{dt}=\alpha,\label{ss44}\end{eqnarray}
where $C^{l-4}$-smooth functions $Q_{1},Q_{2}$ and $Q_{3}$ are
obtained from the right-hand sides of (\ref{ss1})--(\ref{ss3})
with regard in $\beta t=\zeta_{1},\alpha t=\zeta_{2}.$ The corresponding
reduced system has the form \[
\frac{dh}{dt}=Hh,\quad\frac{d\psi}{dt}=\beta_{0},
\quad\frac{d\varphi}{dt}=\alpha_{0},
\quad\frac{d\zeta_{1}}{dt}=\beta,\quad\frac{d\zeta_{2}}{dt}=\alpha.\]
The eigenvalues of the constant matrix $H$ have negative real parts,
hence \begin{eqnarray}
\|e^{Ht}\|\le\mathcal{L}e^{-\kappa_{0}t},\ t>0,\label{1**}\end{eqnarray}
 where $\mathcal{L}=\mathrm{const}\ge1,\ \kappa_0=\mathrm{const}>0.$


By introducing new variables $\zeta = (\psi,\varphi,\zeta_1,\zeta_2)$ and new parameters $\lambda = (\eta_{1}$, $\eta_{2}$, $\eta_{3}, \mu,\varepsilon)$
the following system
\begin{eqnarray}
 & & \frac{dh}{dt}=Hh + \tilde Q_1(h, \zeta, \lambda), \label{ss1-extended}\\[3mm]
 & & \frac{d\zeta}{dt} = \omega_0 + \tilde Q(h, \zeta, \lambda), \label{ss2-extended}
\end{eqnarray}
coincides with (\ref{ss1})--(\ref{ss3}) if $\eta_{1}=\mu^{2}$,
$\eta_{2}=\varepsilon\mu^{2}$, $\eta_{3}=\varepsilon^{2}\mu,$
$\zeta_{1}=\beta t,\zeta_{2}=\alpha t,$ and
\begin{eqnarray*}
& & \tilde Q_1(h, \zeta, \lambda) = \eta_{1}R_{11}+ R_{12}+\eta_{2}R_{13}+\eta_{3}R_{14}, \\
& &\tilde Q = (\tilde Q_2, \tilde Q_3, \tilde Q_4, \tilde Q_5),  \ \ \omega_0 = (\beta_0, \alpha_0, \beta, \alpha), \\
& & \tilde Q_2 = \eta_{1}p^{T}(\psi)\mathcal{G}(z_{0}(\psi),\beta t)+\eta_{1}R_{21}+ R_{22}+\eta_{2}R_{23}+\eta_{3}R_{24}, \\
& & \tilde Q_3 = \eta_{1}R_{31}+ R_{32}+\eta_{2}R_{33}+\eta_{3}R_{34}, \ \ \tilde Q_4 =  \tilde Q_5 = 0.
\end{eqnarray*}

By \cite{Samoilenko1991} or  \cite{Yi1993b},
for all parameters  $\lambda\in I_{\lambda_{0}}=\left\{ \lambda:\,\|\lambda\|\le\lambda_{0}\right\} $,
with  sufficiently small $\lambda_{0}$
system (\ref{ss1-extended})--(\ref{ss2-extended}) has a unique
invariant manifold \begin{equation}
h=w_0(\zeta,\lambda), \ \ \zeta \in \mathbb{T}'_1 \times \mathbb{T}_3, \  \lambda\in I_{\lambda_{0}},   \label{eq:InvMan}\end{equation}
where $w_0(\zeta,\lambda)$ is bounded Lipschitz in $\zeta, \lambda$ and $w_0(\zeta,\lambda) \to 0$ uniformly as $(\eta_1,\eta_2,\eta_3) \to 0.$

In order to show this, for $\lambda\in I_{\lambda_{0}}$ the mapping
$T_\lambda: \, \mathcal{F}_\rho \to \mathcal{F}_\rho$
has been used,
\begin{eqnarray}
T_\lambda(w)(\zeta)= \int_{-\infty}^{0}e^{-H\tau}\tilde Q_{1}\left(w\left(
\zeta_{\tau},\lambda\right),\zeta_{\tau}, \lambda\right)d\tau,\nonumber 
\end{eqnarray}
where $\zeta_{\tau}$ is solution of (\ref{ss2-extended}) for $h = w(\zeta, \lambda)$
with initial conditions $\zeta_{0} = \zeta.$
$\mathcal{F}_\rho$ is the space of Lipschitz continuous functions
$w: \, \mathbb{T}_{1}'\times\mathbb{T}_{3} \to \mathbb{R}^n$ such that
$\| w \|_C \le \rho, \ {\rm Lip}\, w \le \rho,$  \ ${\rm Lip}\, w$ is Lipschitz constant of
$w$ with respect to $\zeta.$

Denote $\eta = \eta_1 +  \eta_2 +  \eta_3.$ Let $M_0$ be a positive constant such that
$$\|D^j R_{11}\| \le M_0, \ \|D^j R_{12}\| \le \|h\|^2 M_0, \ \|D^j R_{13}\| \le M_0, \ \|D^j R_{14}\| \le M_0,$$
for $\|h\| \le \rho_0, |\lambda| \le \lambda_0, \zeta \in \mathbb{T}'_1 \times \mathbb{T}_3$
with some $\rho_0 > 0, \lambda_0 > 0.$ $D^j$ are derivatives of order $|j| \le l-4$ with respect to $h,\zeta,\lambda$
(first derivatives of $R_{12}$ with respect to $h$ have estimate $\|h\| M_0$ and higher derivatives have 
estimate $M_0$).

We consider the subset $\mathcal{F}_{\eta a_0}$ of $\mathcal{F}_{\rho_0}$ which consists of functions $w$ with
$\|w\|_C \le \eta a_0, Lip_\zeta w \le \eta a_0,$ where $a_0$ is some positive constant.

For sufficiently small $\eta,$ the mapping
\begin{equation} \label{mapping-T}
T_{\lambda(\eta)}: \ \mathcal{F}_{\eta a_0} \to \mathcal{F}_{\eta a_0}
\end{equation}
is well defined. Here $\lambda(\eta)$ means $\lambda = (\eta_{1}$, $\eta_{2}$, $\eta_{3}, \mu,\varepsilon)$
with $\eta_{1} + \eta_{2} + \eta_{3} =\eta.$
Really, for $w \in \mathcal{F}_{\eta a_0},$ the function $\tilde Q_1$ has the following estimate
$$\|\tilde Q_1(w(\zeta,\lambda),\zeta,\lambda)\|_C \le \eta M_0 + \eta^2 a_0^2 M_0,$$
hence, taking into account (\ref{1**}),
\begin{eqnarray} \label{otsinkaT}
\|T_{\lambda(\eta)}(w)\|_C \le \frac{\mathcal{L}}{\kappa_0}(\eta + \eta^2 a_0^2)M_0.
\end{eqnarray}

Let $\zeta_{\tau}^1$ and $\zeta_{\tau}^2$ be two solutions of (\ref{ss2-extended})
with $h = w(\zeta,\lambda), \|w\|_C \le \eta a_0,$ $Lip_\zeta w \le \eta a_0$ and initial values $\zeta_{0}^1$ and $\zeta_{0}^2.$
Then
\begin{eqnarray} \label{otsinka-zeta}
\| \zeta_{t}^1 - \zeta_{t}^2 \| \le \| \zeta_{0}^1 - \zeta_{0}^2 \| e^{(\eta a_0 Lip_h \tilde Q +
Lip_\zeta \tilde Q)t} \le \| \zeta_{0}^1 - \zeta_{0}^2 \| e^{\eta a_1 t},
\end{eqnarray}
where $a_1$ is a positive constant independent on $\eta.$
Inequality (\ref{otsinka-zeta}) permits to estimate Lipschitz constant of $T(w):$
\begin{eqnarray}
& & \| T_{\lambda(\eta)}(w)(\zeta^1_0) - T_{\lambda(\eta)}(w)(\zeta_0^2)\| \le \nonumber \\
& & \le \int_{\infty}^0 \mathcal{L} e^{-\kappa_0\tau}\left( Lip_h \tilde Q_1 Lip_\zeta w+ Lip_\zeta \tilde Q_1\right)
\| \zeta_{\tau}^1 - \zeta_{\tau}^2 \| d\tau \le \nonumber \\
& & \le \frac{ \mathcal{L}}{\kappa_0 - a_1 \eta} \left( \eta a_0 Lip_h \tilde Q_1 + Lip_\zeta \tilde Q_1\right)
\| \zeta_{0}^1 - \zeta_{0}^2 \|. \label{otsinkaTL}
 \end{eqnarray}
One can verify that
\begin{equation} \label{516-1}
Lip_h \tilde Q_1 \le \eta M_0 + 3 \eta a_0 M_0, \ \ Lip_\zeta \tilde Q_1 \le \eta M_0 + \eta^2 a_0^2 M_0
\end{equation}
if $\|h\| \le \eta a_0.$

There exist positive $a_0$ and $\eta_0$ such that
$$\frac{ \mathcal{L}}{\kappa_0 - a_1 \eta} \left( \eta a_0 Lip_h \tilde Q_1 + Lip_\zeta \tilde Q_1\right) \le \eta a_0, \ \ \frac{ \mathcal{L} M_0}{\kappa_0}(\eta + \eta^2 a_0^2) \le \eta a_0$$
for all $\eta \le \eta_0.$
Taking into account (\ref{516-1}), to this end it suffices
$$\frac{a_0 \kappa_0}{M_0 \mathcal{L}} - 1 \ge \eta a_0^2, \ \
\frac{M_0 \mathcal{L}}{\kappa_0 - a_1\eta}(1 + 4\eta a_0^2 + a_0\eta) \le a_0.$$
Hence, mapping (\ref{mapping-T}) is well defined for $\eta \le \eta_0.$

Analogously to  \cite{Yi1993b} (Theorem~6.1), we show that the map $T_{\lambda(\eta)}(w)$
is a contraction of set $\mathcal{F}_{\eta a_0}$ for all $\eta \le \eta_1$ with some $\eta_1 \le \eta_0.$
The mapping $T_{\lambda(\eta)}$ has unique fixed point $w_0(\zeta, \lambda)$ for all $\lambda \in I_{\lambda_0}$
with $\eta \le \eta_1.$

Expressions in right-hand sides of (\ref{otsinkaT}) and (\ref{otsinkaTL}) don't depend on $\alpha \in
[\alpha_0,\infty)$ (note, that $\alpha$ is contained explicitly only in equation $d\zeta_2/dt = \alpha$).
Hence, values $a_0$ and $\eta_0$ can be chosen independent on $\alpha \in
[\alpha_0,\infty).$
By construction, $w_0$ satisfies
$\|w_0(\zeta, \lambda)\| \le \eta a_0$ with positive constant $a_0$ independent on $\alpha.$


Note that by \cite{Yi1993b},
for sufficiently small $\lambda$ the map
 $$T_{\lambda}(w):
C^{l-4}(\mathbb{T}_{1}'\times\mathbb{T}_{3}, \mathbb{R}^n) \to C^{l-4}(\mathbb{T}_{1}'\times\mathbb{T}_{3}, \mathbb{R}^n)$$
is well defined.

For proving $C^{l-4}$ smoothness of integral manifold $w_0(\zeta,\lambda)$ we use the fiber contraction theorem \cite{Chicone1999}, p. 127.
At first we show that invariant manifold is $C^1$ with respect to $\zeta.$ The continuous differentiability with respect to $\lambda$
is proved analogously.
The smoothness up to $C^{l-4}$ can be improved inductively.

Following \cite{Chicone1999}, p. 336, we introduce the set $\mathcal{F}^1$ of all bounded continuous functions $\Phi$ that map $\mathbb{T}_{1}'\times\mathbb{T}_{3}$
into the set of all $n\times 4$ matrices. Let $\mathcal{F}^1_{\rho}$ denote the closed ball in $\mathcal{F}^1$ with radius $\rho.$

For $w \in\mathcal{F}_{\eta a_0},$ we consider the map $T^1_\lambda(w,\Phi): \, \mathcal{F}_{\eta a_0}
\times \mathcal{F}^1_{\eta a_2}  \to \mathcal{F}^1_{\eta a_2},$
\begin{eqnarray}
& & T^1_\lambda(w,\Phi)(\zeta) = \int_{-\infty}^0 e^{-H\tau} \Biggl( \frac{\partial \tilde Q_1(w(\zeta_\tau, \lambda), \zeta_\tau, \lambda)}{\partial \zeta}
 + \nonumber \\
& & + \frac{\partial \tilde Q_1(w(\zeta_\tau, \lambda), \zeta_\tau, \lambda)}{\partial h}\Phi(\zeta_\tau,\lambda)\Biggl)W(\tau,\lambda)d\tau,
\label{oznT1}
\end{eqnarray}
 where $\zeta_t, W(t,\lambda)$ are solutions of the system
\begin{eqnarray}
& & \frac{d\zeta}{dt} = \omega_0 + \tilde Q (w(\zeta, \lambda), \zeta,\lambda),
\label{eq-1w}
\\
& & \frac{dW}{dt} = \frac{\partial \tilde Q(w(\zeta, \lambda), \zeta,\lambda)}{\partial \zeta}W +
\frac{\partial \tilde Q(w(\zeta, \lambda), \zeta,\lambda)}{\partial h}\Phi(\zeta,\lambda)W.  \label{eq-2w}
\end{eqnarray}
Taking into account the structure of the function $\tilde Q,$ we see that
$$\|\frac{\partial \tilde Q(w, \zeta,\lambda)}{\partial \zeta}\| \le \eta K, \
\|\frac{\partial \tilde Q(w, \zeta,\lambda)}{\partial h}\| \le K$$
with some positive constant $K$ independent on $\eta.$
Choosing $\eta$ such that  $K\eta(1 + a_2) \le \kappa_0/4$ and applying Gronwall's
inequality, we obtain
\begin{eqnarray} \label{W1}
\|W(t,\lambda)\| \le M e^{(\kappa_0/4)(t-t_0)},
\end{eqnarray}
where $M$ is some positive constant.

Taking into account (\ref{W1}) and inequalities
$$\left\| \frac{\partial \tilde Q_1}{\partial \zeta}\right\| \le M_0 \eta + M_0 \eta^2 a_0^2, \
\left\| \frac{\partial \tilde Q_1}{\partial h}\right\| \le M_0 \eta + 3M_0 \eta a_0,$$
we get
\begin{eqnarray*} 
& &  \| T^1_\lambda(w,\Phi)\| \le \int_{-\infty}^0 \mathcal{L} e^{-\kappa_0 \tau}
(1 + \eta a_0^2 + \eta a_2 + 3\eta a_0 a_2)\eta M_0 M e^{\kappa_0 \tau/4} d\tau \le \nonumber\\
& & \le \frac{4\mathcal{L} M M_0}{3\kappa_0}(1 + \eta a_0^2 + \eta a_2 + 3\eta a_0 a_2)\eta.
\end{eqnarray*}
There exist $a_2 > 0$ and $\eta_2$ such that the last expression is less then $\eta a_2$ for $\eta \le \eta_2.$
Hence,the mapping $T^1_\lambda(w,\Phi)$ is well defined.

Let us consider the mapping
\begin{eqnarray} \label{TT1}
(w,\Phi) \to (T_{\eta a_0}(w), T^1_{\lambda}(w,\Phi)).
\end{eqnarray}
Analogously to \cite{Chicone1999}, p. 337, it can be shown that (\ref{TT1}) is continuous with respect to $w.$
Now we prove that the mapping (\ref{TT1})
 is a fiber contraction.
For $w \in \mathcal{\mathcal{F}}_{\eta a_0}$ and $\Phi_1, \Phi_2 \in \mathcal{\mathcal{F}}_{\eta a_2}^1$
we get
\begin{eqnarray}
& & \|  T^1_\lambda(w,\Phi_1) - T^1_\lambda(w,\Phi_2) \| \le \nonumber \\
& & \le \|\int_{-\infty}^0 e^{-H\tau} \Biggl( \frac{\partial \tilde Q_1}{\partial \zeta}(W_1 - W_2)  + \frac{\partial \tilde Q_1}{\partial h}(\Phi_1 W_1 - \Phi_2 W_2)\Biggl)d\tau\| \le \nonumber \\
& & \le \int_{-\infty}^0 e^{-H\tau}\Biggl(\|\frac{\partial \tilde Q_1}{\partial h}\| \|W_2\| \|\Phi_1 - \Phi_2\|
+ \nonumber \\
& & + \Bigl( \|\frac{\partial \tilde Q_1}{\partial \zeta}\| + \|\frac{\partial \tilde Q_1}{\partial h}\| \|\Phi_1\|\Bigl)\|W_1 - W_2\|\Biggl)d\tau. \label{T1}
\end{eqnarray}
By (\ref{eq-2w}), we obtain following estimate for $\|W_1 - W_2\|:$
\begin{eqnarray*}
& & \frac{d(W_1 - W_2)}{dt} = \frac{\partial \tilde Q(w, \zeta,\lambda)}{\partial \zeta}(W_1 - W_2) + \frac{\partial \tilde Q(w, \zeta,\lambda)}{\partial h}(\Phi_1 W_1 - \Phi_2 W_2), \\
& & \|W_1(t,\lambda) - W_2(t,\lambda)\| \le \int_0^t \left( \|\frac{\partial \tilde Q}{\partial \zeta}\| +
\|\frac{\partial \tilde Q}{\partial h}\| \|\Phi_1\|\right)\|W_1(s,\lambda) - W_2(s,\lambda)\|ds + \\
& & + \int_0^t \|\frac{\partial \tilde Q}{\partial h}\| \| W_2\| \| \Phi_1 - \Phi_2\|_C ds.
\end{eqnarray*}
Inserting (\ref{W1}) into the second integral and applying the Gronwall's inequality, we get
\begin{eqnarray} \label{W2}
 \|W_1(t,\lambda) - W_2(t,\lambda)\| \le \frac{4 K M}{\kappa_0} e^{(\kappa_0/2)t} \| \Phi_1 - \Phi_2\|_C.
 \end{eqnarray}

Putting (\ref{W2}) into (\ref{T1}), we obtain
$$\|  T^1_{\lambda}(w,\Phi_1) - T^1_{\lambda}(w,\Phi_2) \| \le \varsigma \| \Phi_1 - \Phi_2\|_C,$$
where
$$\varsigma = \frac{2\mathcal{L}M M_0\eta}{\kappa_0}\left( 1 +  a_0 +
\frac{4K}{\kappa_0}(1 + \eta a_0^2 + \eta a_2 + 3\eta a_0 a_2)\right).$$
We can choose $\varsigma < 1$ for sufficiently small $\eta$ hence
the mapping (\ref{TT1}) is a fiber contraction. It has unique globally attracting fixed point $(w_0,w_1).$
By (\ref{oznT1}), it is easy to see that $w_1(\zeta,\lambda)$ is bounded uniformly to $\alpha \in [\alpha_0, \infty).$
Repeating \cite{Chicone1999}, p.296, one can show that $w_0$ is continuously differentiable and $Dw_0 = w_1.$
\bigskip

Taking into account that the invariant manifold (\ref{eq:InvMan})
for $\eta_{1}=\eta_{2}=\eta_{3}=0$ equals to zero $h=0$, it can
be represented as \[
h=\eta_{1}w_{0}(\psi,\zeta_{1},\mu,\eta_{1})+\eta_{2}w_{1}(\psi,\varphi,\zeta_{1},\zeta_{2},\lambda)+
\eta_{3}w_{2}(\psi,\varphi,\zeta_{1},\zeta_{2},\lambda).\]
Note that $w_{0}$ does not depend on $\zeta_{2}$, $\eta_{2},\eta_{3}$,
and $\varepsilon$, since system (\ref{ss1-extended})--(\ref{ss2-extended})
is independent on $\zeta_{2}$ for $\eta_{2}=\eta_{3}=\varepsilon=0$.
Taking into account the dependence of $\eta_{1},$ $\eta_{2}$, and
$\eta_{3}$ on $\mu$ and $\varepsilon$, we obtain that the invariant
manifold of (\ref{ss11})--(\ref{ss44}) has the following form
\begin{eqnarray}
& & h=u(\psi,\varphi,\zeta_{1},\zeta_{2},\mu,\varepsilon)=\mu^{2}u_{0}(\psi,\zeta_{1},\mu)+ \nonumber\\[2mm]
& & \hspace{15mm} +\varepsilon\mu^{2}u_{1}(\psi,\varphi,\zeta_{1},\zeta_{2},\mu,\varepsilon)+
\varepsilon^{2}\mu u_{2}(\psi,\varphi,\zeta_{1},\zeta_{2},\mu,\varepsilon). \label{5.144}
\end{eqnarray}
Respectively, system (\ref{ss1}) - (\ref{ss3}) has integral manifold $\mathfrak{M}_{\mu,\varepsilon}$
defined by the function $u(\psi,\varphi,\beta t,\alpha t,\mu,\varepsilon).$

Since manifold (\ref{5.144}) is smooth, it satisfies the following relation
\begin{eqnarray}
 &  & \frac{\partial u}{\partial\psi}(\beta_{0}+Q_{2}(u,\psi,\varphi,\zeta_{1},\zeta_{2},\mu,\varepsilon))+\frac{\partial u}{\partial\varphi}(\alpha_{0}+Q_{3}(u,\psi,\varphi,\zeta_{1},\zeta_{2},\mu,\varepsilon))\nonumber \\
 &  & +\frac{\partial u}{\partial\zeta_{1}}\beta+\frac{\partial u}{\partial\zeta_{2}}\alpha=Hu+Q_{1}(u,\psi,\varphi,\zeta_{1},\zeta_{2},\mu,\varepsilon).\end{eqnarray}
 Taking into account this expression and performing the change of
variables $h=\tilde{h}+u(\psi,\varphi,\zeta_{1},\zeta_{2},\mu,\varepsilon)$
in system (\ref{ss11})--(\ref{ss44}), we obtain \begin{eqnarray}
\frac{d\tilde{h}}{dt}=\left(H+Q_0(\tilde{h},\psi,\varphi,\zeta_{1},\zeta_{2},\mu,\varepsilon)\right)\tilde{h},\label{lin1}\end{eqnarray}
 where \begin{eqnarray*}
 &  & Q_0(\tilde{h},\psi,\varphi,\zeta_{1},\zeta_{2},\mu,\varepsilon)\tilde{h}=Q_{1}(u+\tilde{h},\psi,\varphi,\zeta_{1},\zeta_{2},\mu,\varepsilon)-Q_{1}(u,\psi,\varphi,\zeta_{1},\zeta_{2},\mu,\varepsilon)\\
 &  & -\frac{\partial u}{\partial\psi}(Q_{2}(u+\tilde{h},\psi,\varphi,\zeta_{1},\zeta_{2},\mu,\varepsilon)-Q_{2}(u,\psi,\varphi,\zeta_{1},\zeta_{2},\mu,\varepsilon))\\
 &  & -\frac{\partial u}{\partial\varphi}(Q_{3}(u+\tilde{h},\psi,\varphi,\zeta_{1},\zeta_{2},\mu,\varepsilon)-Q_{3}(u,\psi,\varphi,\zeta_{1},\zeta_{2},\mu,\varepsilon)).\end{eqnarray*}
The function $Q_0$ can be represented as a sum of two terms
$Q_0=Q_{01}+Q_{02}$ such that $Q_{01}=\mathcal{O}(\|\tilde{h}\|)$
and $Q_{02}=\mathcal{O}(\mu)$. Therefore there exist $a_{0}>0$
and $\mu_{1}>0$ such that $\|Q_0\|_{C}<\kappa_{0}/\left(2\mathcal{L}\right)$
for all $\tilde{h}$ and $\mu$ with $\|\tilde{h}\|\le a_{0}$ and
$\mu\le\mu_{1}$. Here $a_{0}$ does not depend on $\mu_{1}$. 
Taking into account (\ref{1**}) and an estimate of the fundamental solution for 
perturbed linear system  \cite{Samoilenko1991}, we obtain the
following estimate for solutions of (\ref{lin1}): \begin{eqnarray}
\|\tilde{h}(t)\|\le\|\tilde{h}(t_{0})\|\mathcal{L}e^{(-\kappa_{0}+\mathcal{L}\|Q_0\|_{C})(t-t_{0})}\le\|\tilde{h}(t_{0})\|\mathcal{L}e^{-(\kappa_{0}/2)(t-t_{0})}.\label{lin2}\end{eqnarray}
Since $u$ is proportional to $\mu$ and $a_{0}$ does not depend
on $\mu$, for all small enough $\mu_{1}$ it holds $h_{0}=a_{0}-\sup_{0\le\mu\le\mu_{1}}\|u\|_{C}>0$.
Taking into account that $h=\tilde{h}+u$, one can conclude that for
all $h$ with $\|h\|\le h_{0}$ the inequality $\|\tilde{h}\|\le a_{0}$
and estimate (\ref{lin2}) hold.

As result, if $\mu\le\mu_{1}$ and solution $(h(t),\psi(t),\varphi(t))$
of (\ref{ss11})--(\ref{ss44}) satisfies the condition $\|h(t_{0})\|\le h_{0}$
at initial moment of time $t=t_{0}$ then
\begin{eqnarray}
 & & \|h(t)-u(\psi(t),\varphi(t),\beta t,\alpha t,\mu,\varepsilon)\|\nonumber \\[2mm]
 & & \hspace{15mm} \le\mathcal{L}e^{-\frac{\kappa_{0}}{2}(t-t_{0})}\|h(t_{0})-
 u(\psi(t_{0}),\varphi(t_{0}),\beta t_{0},\alpha t_{0},\mu,\varepsilon)\|\label{eq:514}\end{eqnarray}
for all $t\ge t_{0}.$

By \cite{Samoilenko1991} and \cite{Yi1993a}, the integral manifold $\mathfrak{M}_{\mu,\varepsilon}$
is asymptotically stable, i.e. there exists $\nu_{1}=\nu_{1}(\mu_{0},\varepsilon_{0})$
such that if $\rho((h,\psi,\varphi),\mathfrak{M}_{\mu,\varepsilon}(\tau))\le\nu_{1}$
at time $\tau$ then there is a unique $(\psi_{0},\varphi_{0})$ such
that \begin{eqnarray}
 & & \|N(t,\tau,h,\psi,\varphi)-N(t,\tau,u(\psi_{0},\varphi_{0},\beta\tau,\alpha\tau,\mu,\varepsilon),\psi_{0},\varphi_{0})\|\nonumber \\[2mm]
 & &  \hspace{6mm} \le Le^{-\kappa(t-\tau)}\|(h,\psi,\varphi)-(u(\psi_{0},\varphi_{0},\beta\tau,
 \alpha\tau,\mu,\varepsilon),\psi_{0},\varphi_{0})\|, \ t\ge\tau, \label{mani2-1}\end{eqnarray}
where constants $L\ge 1$  $\kappa > 0$ don't
depend on $\alpha,\mu,\varepsilon,$ $\rho(.,.)$ is the metric in
$\mathbb{R}^{n}\times\mathbb{T}'_{1}\times\mathbb{T}_{1},$ $N(t,\tau,h,\psi,\varphi)$
is the solution of the system (\ref{ss1})--(\ref{ss3})
with an initial value $N(\tau,\tau,h,\psi,\varphi)=(h,\psi,\varphi)$,
and $\mathfrak{M}_{\mu,\varepsilon}(\tau)$ is the cross-section of
$\mathfrak{M}_{\mu,\varepsilon}$ for $t=\tau:$ \[
\mathfrak{M}_{\mu,\varepsilon}(\tau)=\{(u(\psi,\varphi,\beta\tau,\alpha\tau,\mu,\varepsilon),\psi,\varphi):\,(\psi,\varphi)\in\mathbb{T}'_{1}\times\mathbb{T}_{1}\}.\]

Inequalities (\ref{eq:514}) and (\ref{mani2-1}) assure the exponential
attraction of all solutions of (\ref{ss1})--(\ref{ss3})
that start at $t=t_{0}$ from a small neighborhood of the unperturbed
manifold $h=0$ to solutions on the perturbed manifold $\mathfrak{M}_{\mu,\varepsilon}$
with the rate of attraction, which is independent on $\mu\in(0,\mu_{0}],\varepsilon\in(0,\varepsilon_{0}],\alpha\ge\alpha_{*}$. \end{proof}
\begin{corollary}
The system (\ref{f1})--(\ref{f2}) has the integral manifold
\begin{eqnarray}
\mathfrak{M}_{\mu,\varepsilon}^{0} & = & \{(z_{0}(\psi)+\mu^{2}\Phi(\psi)u_{0}(\psi,\beta t,\mu)+\varepsilon\mu^{2}\Phi(\psi)u_{1}(\psi,\varphi,\beta t,\alpha t,\mu,\varepsilon)
\nonumber
\\
 &  & +\varepsilon^{2}\mu\Phi(\psi)u_{2}(\psi,\varphi,\beta t,\alpha t,\mu,\varepsilon),\psi,\varphi,t):\ (\psi,\varphi)\in\mathbb{T}_{1}\times\mathbb{T}'_{1},t\in\mathbb{R}\}.\nonumber
\end{eqnarray}

\end{corollary}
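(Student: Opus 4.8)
The plan is to read off the corollary as the transcription of Lemma~\ref{lemma2} back into the original $(z,\theta)$ variables of system (\ref{f1})--(\ref{f2}). Recall that (\ref{ss1})--(\ref{ss3}) was derived from (\ref{f1})--(\ref{f2}) by the substitution (\ref{eq:zamina}), $z=z_0(\psi)+\Phi(\psi)h$, together with the affine angular shift $\theta=\varphi+\frac{1}{\beta_0}\int^{\psi}[h_2(z_0(\xi))-\alpha_0]\,d\xi$. Neither transformation involves $t$, and, since $\det[\,dz_0/d\psi,\Phi(\psi)\,]=\det\Phi_1(\psi)\neq0$ for every $\psi$, the first is a diffeomorphism from $\{\|h\|\le h_0\}\times\mathbb{T}'_{1}$ onto a tubular neighborhood of the limit cycle $z=z_0(\psi)$, provided $h_0$ is small; the second is invertible in $\theta$. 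Composing them yields a $t$-independent diffeomorphism $\Xi:(h,\psi,\varphi)\mapsto(z,\theta)$ that conjugates the flow of (\ref{ss1})--(\ref{ss3}) to the flow of (\ref{f1})--(\ref{f2}).

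First I would check that the graph $h=u(\psi,\varphi,\beta t,\alpha t,\mu,\varepsilon)$ produced by Lemma~\ref{lemma2} lies inside the region $\|h\|\le h_0$ on which $\Xi$ is defined. This is immediate from the bounds $\|u_j\|_{C^{l-4}}\le M_1$: since $u=\mu^{2}u_0+\varepsilon\mu^{2}u_1+\varepsilon^{2}\mu u_2$, one has $\|u\|=\mathcal{O}(\mu^{2}+\varepsilon\mu^{2}+\varepsilon^{2}\mu)$, which is $\le h_0$ once $\mu_0,\varepsilon_0$ are chosen small enough. Next, since $\Xi$ is $t$-independent and maps solutions of (\ref{ss1})--(\ref{ss3}) onto solutions of (\ref{f1})--(\ref{f2}) with the same time parametrization, it carries integral manifolds of the former onto integral manifolds of the latter: invariance of a graph in the extended phase space is preserved because the conjugacy acts pointwise and commutes with the $t$-direction.

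Then I would simply apply $\Xi$ to $\mathfrak{M}_{\mu,\varepsilon}$. Substituting $h=u$ into (\ref{eq:zamina}) gives
\[
z=z_0(\psi)+\mu^{2}\Phi(\psi)u_0+\varepsilon\mu^{2}\Phi(\psi)u_1+\varepsilon^{2}\mu\Phi(\psi)u_2,
\]
which is exactly the $z$-component appearing in $\mathfrak{M}^{0}_{\mu,\varepsilon}$, the angular coordinate being recovered from $\varphi$ through the affine shift. The $C^{l-4}$ smoothness and the stated $4\pi$- and $2\pi$-periodicities transfer without change, since $\Phi$ itself is $C^{l-4}$ and $4\pi$-periodic and the shift is smooth.

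I do not expect any genuine obstacle here: the corollary is a pure change-of-coordinates statement, and the one point deserving care is that $\Xi$ remains a diffeomorphism on the small tube actually containing the perturbed manifold, which the uniform bound $M_1$ of Lemma~\ref{lemma2} guarantees.
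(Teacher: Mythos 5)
Your proposal is correct and coincides with the paper's (implicit) argument: the corollary is stated there without proof precisely because it is the immediate transcription of Lemma~\ref{lemma2} through the change of variables (\ref{eq:zamina}) and the angular shift, which is exactly what you carry out. The points you single out for care --- that $\|u\|=\mathcal{O}(\mu^{2}+\varepsilon\mu^{2}+\varepsilon^{2}\mu)\le h_{0}$ so the graph stays in the tube where $z=z_{0}(\psi)+\Phi(\psi)h$ is a diffeomorphism, and that the $t$-independent conjugacy preserves integral manifolds --- are the right ones and settle the matter.
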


\section{Investigation of the system on the manifold \label{sec:Investigation-of-the}}

Substituting the expression for the invariant manifold (\ref{mani00})
into the equations (\ref{ss2})--(\ref{ss3}), we obtain the system
on the manifold \begin{eqnarray}
 &  & \frac{d\psi}{dt}=\beta_{0}+\mu^{2}p^{T}(\psi)\mathcal{G}(z_{0}(\psi),\beta t)+\mu^{4}S_{11}(\psi,\beta t,\mu)
\nonumber \\
 & &  \hspace{15mm}+ \varepsilon\mu^{2}S_{12}(\psi,\varphi,\beta t,\alpha t,\mu,\varepsilon))
 +\varepsilon^{2}\mu S_{13}(\psi,\varphi,\beta t,\alpha t,\mu,\varepsilon),  \label{m1} \\
 &  & \frac{d\varphi}{dt}=\alpha_{0}+\mu^{2}S_{21}(\psi,\beta t,\mu)+\varepsilon\mu^{2}S_{22}(\psi,\varphi,\beta t,\alpha t,\mu,\varepsilon)+ \nonumber \\
& & \hspace{15mm}+ \varepsilon^{2}\mu S_{23}(\psi,\varphi,\beta t,\alpha t,\mu,\varepsilon),\label{m2}\end{eqnarray}
where $C^{l-4}$-smooth functions $S_{j},j=1,2,3$ are periodic in
$\psi,\varphi,\beta t,\alpha t.$

Now we assume that the frequencies $\beta_{0}$ and $\beta$ are close
to each other \[
\beta-\beta_{0}=\mu^{2}\Delta.\]
In the system (\ref{m1})--(\ref{m2}), we change the variables according
to the formula \[
\psi=\beta t+\psi_{1}\]
and obtain the following system \begin{eqnarray}
& & \frac{d\psi_{1}}{dt} = -\mu^{2}\Delta+\mu^{2}p^{T}(\beta t+\psi_{1})\mathcal{G}(z_{0}(\beta t+\psi_{1}),\beta t)+\mu^{4}S_{11}(\beta t+\psi_{1},\beta t,\mu) \nonumber \\
 &  & \quad \quad + \varepsilon\mu^{2}S_{12}(\beta t+\psi_{1},\varphi,\beta t,\alpha t,\mu,\varepsilon)+\varepsilon^{2}\mu S_{13}(\beta t+\psi_{1},\varphi,\beta t,\alpha t,\mu,\varepsilon), \label{eq:theta-1} \\
 & &  \frac{d\varphi}{dt} =  \alpha_{0}+\mu^{2}S_{21}(\beta t+\psi_{1},\beta t,\mu)+\varepsilon\mu^{2}S_{22}(\beta t+\psi_{1},\varphi,\beta t,\alpha t,\mu,\varepsilon)\nonumber\\
 &  & \hspace{45mm}+\varepsilon^{2}\mu S_{23}(\beta t+\psi_{1},\varphi,\beta t,\alpha t,\mu,\varepsilon). \label{eq:phi-1} \end{eqnarray}

Performing now the change of variables \begin{eqnarray*}
 &  & \psi_{1}=\psi_{2}+\frac{\mu^{2}}{\beta}\int_{0}^{\beta t}[p^{T}(\xi+\psi_{1})\mathcal{G}(z_{0}(\xi+\psi_{1}),\xi)- G(\psi_{1})]d\xi,\\[2mm]
 &  & \varphi=\varphi_{2}+\frac{\mu^{2}}{\beta}\int_{0}^{\beta t}[S_{21}(\xi+\psi_{1},\xi,\mu)-{S}_{21}(\psi_{1},\mu)]d\xi,\\[2mm]\end{eqnarray*}
 where \[
G(\psi_{1}):=\frac{1}{2\pi}\int_{0}^{2\pi}p^{T}(\xi+\psi_{1})\mathcal{G}(z_{0}(\xi+\psi_{1}),\xi)d\xi,\]
\[
\bar{S}_{21}(\psi_{1},\mu):=\frac{1}{2\pi}\int_{0}^{2\pi}S_{21}(\xi+\psi_{1},\xi,\mu)d\xi,\]
the system (\ref{eq:theta-1})--(\ref{eq:phi-1}) takes the form
\begin{eqnarray}
 &  & \frac{d\psi_{2}}{dt}=-\Delta\mu^{2}+\mu^{2} {G}(\psi_{2})+\mu^{4}\tilde{S}_{11}(\psi_{2},\beta t,\mu)
 \nonumber\\
 &  & \hspace{15mm} + \varepsilon\mu^{2}\tilde{S}_{12}(\psi_{2},\varphi_{2},\beta t,\alpha t,\mu,\varepsilon)+
 \varepsilon^{2}\mu\tilde{S}_{13}(\psi_{2},\varphi_{2},\beta t,\alpha t,\mu,\varepsilon), \label{mm1} \\
 &  & \frac{d\varphi_{2}}{dt}=\alpha_{0}+\mu^{2}\bar{S}_{21}(\psi_{2},\mu)
 +\mu^{4}\tilde{S}_{21}(\psi_{2},\beta t,\mu) 
 \nonumber \\
 & & \hspace{15mm} + \varepsilon\mu^{2}\tilde{S}_{22}(\psi_{2},\varphi_{2},\beta t,\alpha t,\mu,\varepsilon)+
 \varepsilon^{2}\mu\tilde{S}_{23}(\psi_{2},\varphi_{2},\beta t,\alpha t,\mu,\varepsilon),\label{mm2}\end{eqnarray}
where the functions in the right hand side are $C^{l-4}$-smooth and
periodic in $\theta_{1},\varphi_{1},\beta t,\alpha t.$

Together with (\ref{mm1})--(\ref{mm2}) we consider the averaged system
\begin{eqnarray}
 &  & \frac{d\psi_{2}}{dt}=-\Delta\mu^{2}+\mu^{2}{G}(\psi_{2}),\label{mm01}\\
 &  & \frac{d\varphi_{2}}{dt}=\alpha_{0}+\mu^{2}\bar{S}_{21}(\psi_{2},\mu).\label{mm02}\end{eqnarray}
Denote \[
G_-:=\min_{\xi\in[0,2\pi]}{G}(\xi),\quad G_+:=\max_{\xi\in[0,2\pi]}{G}(\xi).\]
 Then for $\Delta=(\beta-\beta_{0})/\mu^{2}\in[{G}_{-},{G}_{+}]$
the equation
$$
\Delta={G}(\xi)
$$
 has real solutions.

Assume that $\Delta$ is a regular value of the map ${G}$, i.e.
all pre-images $\xi=\vartheta_{j}^{0}$ of $\Delta$ by ${G}(\xi)$
are non-degenerate ${G}'(\vartheta_{j}^{0})\neq0.$ Then the number
of pre-images is finite and even due to the periodicity of ${G}(\xi)$.
The signs of every two sequential values ${G}'(\vartheta_{j}^{0})$
and ${G}'(\vartheta_{j+1}^{0})$ are opposite
\[
{G}'(\vartheta_{2k-1}^{0})=\alpha_{k}>0,
\quad {G}'(\vartheta_{2k}^{0})=-\beta_{k}<0,\quad k=1,...,N.
\]

At every interval $(\vartheta_{2k-1}^{0},\vartheta_{2k}^{0})$ the
function ${G}(\theta)-\Delta$ is positive and \[
\min_{\theta\in[\vartheta_{2k-1}^{0}+\delta,\vartheta_{2k}^{0}-\delta]}
{G}(\theta)>\Delta\]
 for every sufficiently small $\delta.$

Analogously, at every interval $(\vartheta_{2k}^{0},\vartheta_{2k+1}^{0})$
the function ${G}(\theta)-\Delta$ is negative and \[
\max_{\theta\in[\vartheta_{2k}^{0}+\delta,\vartheta_{2k+1}^{0}-\delta]}{G}(\theta)
<\Delta\]
for every sufficiently small $\delta.$ Due to the periodicity of
${G}(\theta)$ we identify $\vartheta_{2N+1}^{0}$ with $\vartheta_{1}^{0}$
and $\vartheta_{0}^{0}$ with $\vartheta_{2N}^{0}.$

The averaged system (\ref{mm01})--(\ref{mm02}) has $2N$ one-dimensional
invariant manifolds \begin{eqnarray*}
\Pi_{j}^{0}=\{(\vartheta_{j}^{0},\varphi_{2}):\ \varphi_{2}\in\mathbb{T}_{1}\}.\end{eqnarray*}
 The system on the manifold $\Pi_{j}^{0}$ reduces to \begin{eqnarray*}
\frac{d\varphi_{2}}{dt}=\alpha_{0}+\mu^{2}\bar{S}_{21}(\vartheta_{j}^{0},\mu).\end{eqnarray*}
Manifolds $\Pi_{2k}^{0},k=1,...,N,$ are exponentially stable and
manifolds $\Pi_{2k-1}^{0},k=1,...,N,$ are exponentially unstable.
\begin{lemma}
\label{lemma4} There exist $\mu_{0}>0$ and $c_{0}>0$ such that
for all $0<\mu\le\mu_{0}$ and $\varepsilon\le c_{0}\sqrt{\mu}$ the system
(\ref{mm1})--(\ref{mm2}) has $2N$ integral manifolds
\begin{eqnarray}
\Pi_{j}=\left\{ \left(\psi_{2},\varphi_{2},t\right):\ \psi_{2}=\vartheta_{j}^{0}+v_{j}(\varphi_{2},\beta t,\alpha t,\mu,\varepsilon),\,\varphi_{2}\in\mathbb{T}_{1},t\in\mathbb{R}\right\} ,
\nonumber
\end{eqnarray}
where\[
v_{j}=\mu^{2}v_{j0}(\beta t,\mu)+\varepsilon v_{j1}(\varphi_{2},\beta t,\alpha t,\mu,\varepsilon)+\frac{\varepsilon^{2}}{\mu}v_{j2}(\varphi_{2},\beta t,\alpha t,\mu,\varepsilon),\]
with $C^{l-4}$ smooth, periodic in $\varphi_{2},\beta t,\alpha t$
functions $v_{jk}$, such that $\left\Vert v_{jk}\right\Vert _{C^{l-4}}\le M_{3}$
with the constant $M_{3}$ independent on $\alpha,\mu,\varepsilon.$

The manifolds $\Pi_{2k}$, $k=1,...,N,$ are exponentially stable
in the following sense: there exists $\delta_{0}$ such that if $|\psi_{20}-\vartheta_{2k}^{0}|\le\delta_{0}$
and $\varphi_{0}\in\mathbb{T}_{1}$, then there exists an unique $\varphi_{01}$
such that for $t\ge t_{0}$ the following inequality holds \begin{eqnarray}
 &  & |\psi_{2}(t,t_{0},\psi_{20},\varphi_{0})-\psi_{2}(t,t_{0},\vartheta_{2k}^{0}+v_{2k}(\varphi_{01},\beta t_{0},\alpha t_{0},\mu,\varepsilon),\varphi_{01})|\nonumber \\[2mm]
 &  & +|\varphi_{2}(t,t_{0},\psi_{20},\varphi_{0})-\varphi_{2}(t,t_{0},\vartheta_{2k}^{0}+v_{2k}(\varphi_{01},\beta t_{0},\alpha t_{0},\mu,\varepsilon),\varphi_{01})|\nonumber \\[2mm]
 &  & \le\mathcal{L}_{2}e^{-\mu^{2}\kappa_{2}(t-t_{0})}\left(|\varphi_{0}-\varphi_{01}|+
 |\psi_{20}-\vartheta_{2k}^{0}-v_{2k}(\varphi_{01},\beta t_{0},\alpha t_{0},\mu,\varepsilon)|\right),\label{sta}\end{eqnarray}
 where constants $\mathcal{L}_{2}\ge1$ and $\kappa_{2}>0$ are independent on
$\alpha,\mu$, and $\varepsilon.$

The manifolds $\Pi_{2k-1}$, $k=1,...,N,$ are exponentially unstable
in the following sense: there exists $\delta_{0}$ such that if $|\psi_{20}-\vartheta_{2k-1}^{0}|\le\delta_{0}$
and $\varphi_{0}\in\mathbb{T}_{1}$, then there exists a unique $\varphi_{01}$
such that for $t\le t_{0}$ the following inequality holds \begin{eqnarray}
 &  & |\psi_{2}(t,t_{0},\psi_{20},\varphi_{0})-\psi_{2}(t,t_{
 0},\vartheta_{2k-1}^{0}+v_{2k-1}(\varphi_{01},\beta t_{0},\alpha t_{0},\mu,\varepsilon),\varphi_{01})|\nonumber \\[2mm]
 &  & +|\varphi_{2}(t,t_{0},\psi_{20},\varphi_{0})-\varphi_{2}(t,t_{0},\vartheta_{2k-1}^{0}+v_{2k-1}(\varphi_{01},\beta t_{0},\alpha t_{0},\mu,\varepsilon),\varphi_{01})|\nonumber \\[2mm]
 &  & \le\mathcal{L}_{3}e^{\mu^{2}\kappa_{3}(t-t_{0})}\left(|\varphi_{0}-\varphi_{01}|+
 |\psi_{20}-\vartheta_{2k-1}^{0}-v_{2k-1}(\varphi_{01},\beta t_{0},
 \alpha t_{0},\mu,\varepsilon)|\right),\label{sta2-}\end{eqnarray}
 where constants $\mathcal{L}_{3}\ge1$ and $\kappa_{3}>0$ are independent on
$\alpha,\mu$, and $\varepsilon.$
\end{lemma}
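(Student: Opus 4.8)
The plan is to build each $\Pi_j$ as the graph of $\psi_2$ over the angular variables by a Lyapunov--Perron fixed-point argument, exploiting that $\psi_2=\vartheta_j^0$ is a hyperbolic zero of the averaged field in (\ref{mm01}). First I would localize by setting $\psi_2=\vartheta_j^0+w$, which turns the $\psi_2$-equation of (\ref{mm1}) into
\[
\frac{dw}{dt}=\mu^2 G'(\vartheta_j^0)\,w+\mu^2 g_j(w)+R_j,
\]
where $g_j(w):=G(\vartheta_j^0+w)-G(\vartheta_j^0)-G'(\vartheta_j^0)w=\mathcal{O}(w^2)$ and $R_j:=\mu^4\tilde S_{11}+\varepsilon\mu^2\tilde S_{12}+\varepsilon^2\mu\tilde S_{13}$ gathers the explicit perturbations, so $\|R_j\|=\mathcal{O}(\mu^4+\varepsilon\mu^2+\varepsilon^2\mu)$. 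The transverse linear rate is $\mu^2 G'(\vartheta_j^0)$, equal to $-\mu^2\beta_k<0$ for $j=2k$ and to $\mu^2\alpha_k>0$ for $j=2k-1$. Exactly as in the proof of Lemma~\ref{lemma2} I would adjoin the fast angles $\zeta_1=\beta t$, $\zeta_2=\alpha t$ to obtain an autonomous system whose angular part $(\varphi_2,\zeta_1,\zeta_2)\in\mathbb{T}_3$ moves with the nearly constant frequencies $(\alpha_0+\mathcal{O}(\mu^2),\beta,\alpha)$.

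Second, seeking the graph $w=v(\varphi_2,\zeta_1,\zeta_2)$, invariance becomes the transport equation $\mathcal{D}v=\mu^2 G'(\vartheta_j^0)v+\mu^2 g_j(v)+R_j$, where $\mathcal{D}$ differentiates along the angular characteristics. Using the scalar exponential dichotomy of rate $\mu^2|G'(\vartheta_j^0)|$, I would recast this as $v=T_j(v)$ with, for the attracting manifolds $j=2k$,
\[
T_j(v)(\xi)=\int_{-\infty}^{0}e^{-\mu^2 G'(\vartheta_j^0)\tau}\bigl[\mu^2 g_j(v(\Theta_\tau\xi))+R_j(\Theta_\tau\xi)\bigr]\,d\tau,
\]
$\Theta_\tau$ being the angular flow evaluated at $w=v$ and $\xi=(\varphi_2,\zeta_1,\zeta_2)$; for the repelling ones $j=2k-1$ one replaces $\int_{-\infty}^0$ by $-\int_{0}^{\infty}$. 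Since $\int_{-\infty}^0 e^{-\mu^2 G'(\vartheta_j^0)\tau}\,d\tau=1/(\mu^2|G'(\vartheta_j^0)|)$, the operator carries a factor $1/\mu^2$: it sends a ball of radius $\rho$ into one of radius of order $\mu^{-2}\|R_j\|+\rho^2$, that is $\mu^2+\varepsilon+\varepsilon^2/\mu+\mathcal{O}(\rho^2)$, so choosing $\rho$ of order $\mu^2+\varepsilon+\varepsilon^2/\mu$ gives a self-map. The same $\mu^{-2}$ bookkeeping on the Lipschitz constant --- the term $\mu^2 g_j$ contributing $\mathcal{O}(\rho)$, and the $v$-dependence of $\Theta_\tau$, which enters only at order $\mu^2$, contributing $\mathcal{O}(\mu^2+\varepsilon+\varepsilon^2/\mu)$ after integration against $e^{-\mu^2 G'(\vartheta_j^0)\tau}\mu^2|\tau|$ --- renders $T_j$ a contraction. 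Its unique fixed point has size $\mathcal{O}(\mu^2+\varepsilon+\varepsilon^2/\mu)$, and separating the contributions of $\tilde S_{11},\tilde S_{12},\tilde S_{13}$ produces the claimed form $v_j=\mu^2 v_{j0}(\beta t,\mu)+\varepsilon v_{j1}+(\varepsilon^2/\mu)v_{j2}$; here $v_{j0}$ is independent of $\varphi_2,\zeta_2$ because $\tilde S_{11}$ depends only on $(\psi_2,\beta t)$. The $C^{l-4}$ regularity and the uniform bounds $\|v_{jk}\|_{C^{l-4}}\le M_3$ I would obtain precisely as in Lemma~\ref{lemma2}, by running the fiber contraction theorem on the variational equations and bootstrapping to order $l-4$.

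Third, for the stability estimates (\ref{sta}) and (\ref{sta2-}) I would insert $\psi_2=\vartheta_j^0+v_j+\tilde w$ into (\ref{mm1}), which yields a scalar equation $\dot{\tilde w}=[\mu^2 G'(\vartheta_j^0)+\mathcal{O}(\mu^2\tilde w)+\mathcal{O}(\rho)]\tilde w$ of the same shape as (\ref{lin1}). For $j=2k$ the linear rate is $-\mu^2\beta_k$; estimating the perturbed fundamental solution as in (\ref{lin2}) gives contraction at rate $\mu^2\kappa_2$ with $\kappa_2<\beta_k$ independent of $\alpha,\mu,\varepsilon$, and the slaved $\varphi_2$-equation inherits this rate, the unique $\varphi_{01}$ being the asymptotic phase. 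Reversing time handles the repelling case $j=2k-1$ and gives (\ref{sta2-}).

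The step I expect to be the real obstacle --- and the reason for the hypothesis $\varepsilon\le c_0\sqrt\mu$ --- is that the normal hyperbolicity degenerates as $\mu\to0$: both the dichotomy rate $\mu^2|G'(\vartheta_j^0)|$ and the forcing $R_j$ vanish with $\mu$, so everything hinges on the ratio $\|R_j\|/\mu^2=\mu^2+\varepsilon+\varepsilon^2/\mu$ being small rather than on $R_j$ being small. The dangerous summand is $\varepsilon^2/\mu$; keeping the fixed point inside the neighbourhood $|w|\le\delta_0$ on which the linearisation dominates forces $\varepsilon^2/\mu\le c_0^2$, i.e.\ $\varepsilon\le c_0\sqrt\mu$. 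Making every constant ($\rho$, the contraction factor, and the decay rates $\kappa_2,\kappa_3$) uniform in this singular limit --- rather than quoting an integral-manifold theorem with a fixed dichotomy gap --- is what demands the $\mu$-explicit estimates sketched above.
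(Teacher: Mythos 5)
Your proposal is correct and follows essentially the same route as the paper: a Lyapunov--Perron fixed point for the graph $\psi_{2}=\vartheta_{j}^{0}+v_{j}$ over the angular variables, the fiber contraction theorem (as in Lemma~\ref{lemma2}) for the $C^{l-4}$ smoothness and uniform bounds, a linearized fundamental-solution estimate with asymptotic phase for (\ref{sta}), and time reversal for the repelling manifolds (\ref{sta2-}). The only real difference is presentational: the paper first rescales time by $\tau=\mu^{2}t$ and introduces the parameters $\eta_{1}=\mu^{2}$, $\eta_{2}=\varepsilon$, $\eta_{3}=\varepsilon^{2}/\mu$, $\chi=1/\mu^{2}$, so that the dichotomy rate becomes the $O(1)$ constant $-\beta_{k}$ and the perturbations become $\mu^{2}\tilde{S}_{11}+\varepsilon\tilde{S}_{12}+(\varepsilon^{2}/\mu)\tilde{S}_{13}$, whereas you keep the original time scale and carry the factor $\mu^{-2}$ explicitly through the integral estimates --- your operator $T_{j}$ is exactly the paper's mapping $T$ after undoing that substitution, and both derivations isolate $\varepsilon^{2}/\mu$ as the term that forces the hypothesis $\varepsilon\le c_{0}\sqrt{\mu}$.
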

\textbf{Proof.} Setting $\zeta_{1}=\beta t,\zeta_{2}=\alpha t$ in
(\ref{mm1})--(\ref{mm2}) we obtain the following autonomous system
on $4$-dimensional torus $\mathbb{T}_{4}:$
\begin{eqnarray}
 & & \frac{d\psi_{2}}{dt}=-\Delta\mu^{2}+\mu^{2}{G}(\psi_{2})+
 \mu^{4}\tilde{S}_{11}(\psi_{2},\zeta_{1},\mu) \nonumber\\
 & & \hspace{10mm} + \varepsilon\mu^{2}\tilde{S}_{12}(\psi_{2},\varphi_{2},\zeta_{1},\zeta_{2},\mu,\varepsilon)+
 \varepsilon^{2}\mu\tilde{S}_{13}(\psi_{2},\varphi_{2},\zeta_{1},\zeta_{2},\mu,\varepsilon), \label{4mm1} \\
 & & \frac{d\varphi_{2}}{dt}=\alpha_{0}+\mu^{2}\bar{S}_{21}(\psi_{2},\mu) +
 \mu^{4}\tilde{S}_{21}(\psi_{2},\zeta_{1},\mu) \nonumber\\
 & & \hspace{10mm} + \varepsilon\mu^{2}\tilde{S}_{22}(\psi_{2},\varphi_{2},\zeta_{1},\zeta_{2},\mu,\varepsilon)+
 \varepsilon^{2}\mu\tilde{S}_{23}(\psi_{2},\varphi_{2},\zeta_{1},\zeta_{2},\mu,\varepsilon),
\label{4mm2} \\
 & & \frac{d\zeta_{1}}{dt}=\beta,\qquad\frac{d\zeta_{2}}{dt}=\alpha.\label{4mm3}
 \end{eqnarray}

Let us consider a neighborhood of the point $\psi_{2}=\vartheta_{2k}^{0}$ where $k \in \{1,...,N\}.$
Neighborhoods of points $\psi_{2}=\vartheta_{2k-1}^{0}, k = 1,...,N,$ are considered analogously.
In system (\ref{4mm1})--(\ref{4mm3}), we change the variables
$\psi_{2}=\vartheta_{2k}^{0}+b_{1}$ and introduce the new time $\tau=\mu^{2}t$
\begin{eqnarray}
 &  & \frac{db_{1}}{d\tau}=-\beta_{k}b_{1}+\bar{G}_{2}(b_{1})b_{1}^{2}+\mu^{2}\tilde{S}_{11}(\vartheta_{2k}^{0}+
 b_{1},\zeta_{1},\mu) \nonumber\\
 &  & \hspace{5mm} + \varepsilon\tilde{S}_{12}(\vartheta_{2k}^{0}+b_{1},\varphi_{2},\zeta_{1},\zeta_{2},\mu,\varepsilon)+
 \frac{\varepsilon^{2}}{\mu}\tilde{S}_{13}(\vartheta_{2k}^{0}+
 b_{1},\varphi_{2},\zeta_{1},\zeta_{2},\mu,\varepsilon), \label{mm1j} \\
 &  & \frac{d\varphi_{2}}{d\tau}=\frac{\alpha_{0}}{\mu^{2}}+\bar{S}_{21}(\vartheta_{2k}^{0}+b_{1},\mu)+
 \mu^{2}\tilde{S}_{21}(\vartheta_{2k}^{0}+ b_{1},\zeta_{1},\mu) 
 \nonumber\\
 &  & \hspace{5mm}
+ \varepsilon\tilde{S}_{22}(\vartheta_{2k}^{0}+b_{1},\varphi_{2},\zeta_{1},\zeta_{2},\mu,\varepsilon)  + \frac{\varepsilon^{2}}{\mu}\tilde{S}_{23}(\vartheta_{2k}^{0}+
 b_{1},\varphi_{2},\zeta_{1},\zeta_{2},\mu,\varepsilon), \label{mm2j}
\\
 &  & \frac{d\zeta_{1}}{d\tau}=\frac{\beta}{\mu^{2}},\qquad\frac{d\zeta_{2}}{d\tau}=\frac{\alpha}{\mu^{2}},\label{mm3j}
 \end{eqnarray}
 where $\bar{G}_{2}(b_{1})b_{1}^{2}:=({G}(\vartheta_{2l}^{0}+b_{1})-\Delta)+\beta_{k}b_{1}$.

Extending the system (\ref{mm1j})--(\ref{mm3j}) by introducing new
parameters $\eta_{1}$, $\eta_{2}$, $\eta_{3}$ and $\chi$ we obtain the system
\begin{eqnarray}
 &  & \frac{db_{1}}{d\tau}=-\beta_{k}b_{1}+\bar{G}_{2}(b_{1})b_{1}^{2}+
 \eta_{1}\tilde{S}_{11}(\vartheta_{2k}^{0}+b_{1},\zeta_{1},\mu) \nonumber\\[2mm]
 &  & \hspace{5mm}+\eta_{2}\tilde{S}_{12}(\vartheta_{2k}^{0}+b_{1},\varphi_{2},\zeta_{1},\zeta_{2},\mu,\varepsilon)+
 \eta_{3}\tilde{S}_{13}(\vartheta_{2k}^{0}+b_{1},\varphi_{2},\zeta_{1},\zeta_{2},\mu,\varepsilon),\label{mm1jext} \\[2mm]
 &  & \frac{d\varphi_{2}}{d\tau}= \chi\alpha_{0} + \bar{S}_{21}(\vartheta_{2k}^{0}+b_{1},\mu)+
 \eta_{1}\tilde{S}_{21}(\vartheta_{2k}^{0}+b_{1},\zeta_{1},\mu) \nonumber \\[2mm]
 &  & \hspace{5mm}
 + \eta_{2}\tilde{S}_{22}(\vartheta_{2k}^{0}+b_{1},\varphi_{2},\zeta_{1},\zeta_{2},\mu,\varepsilon)
+\eta_{3}\tilde{S}_{23}(\vartheta_{2k}^{0}+b_{1},\varphi_{2},\zeta_{1},\zeta_{2},
 \mu,\varepsilon), \label{mm2jext}\\[2mm]
 &  & \frac{d\zeta_{1}}{d\tau}= \chi\beta, \qquad \frac{d\zeta_{2}}{d\tau} = \chi\alpha,\label{mm3jext}
 \end{eqnarray}
which coincides with (\ref{mm1j})--(\ref{mm3j}) for $\eta_{1}=\mu^{2}$,
$\eta_{2}=\varepsilon$, $\eta_{3}=\varepsilon^{2}/\mu,$ $\chi = 1 / \mu^2.$
We assume that $\lambda = (\eta_{1},\eta_{2},\eta_{1}, \mu,\varepsilon) \in I_{\lambda_0} = \{\lambda: \ \|\lambda\| \le \lambda_0\},
$ and $\chi \ge \chi_0$ with some positive $\lambda_0$ and $\chi_0.$

Let $\beta_k \in [\beta_m, \beta_M]$ with some constants $\beta_M \ge \beta_m > 0.$

We consider the function space
\begin{eqnarray} \label{space}
C^{l-4}(\mathbb{T}_3 \times I_{\lambda_0} \times [\chi_0,\infty)  \times [\beta_m, \beta_M])
 \end{eqnarray}
of bounded together with their $l-4$ derivatives functions $w(\varphi_2, \zeta_1,\zeta_2,\lambda, \chi,\beta_k)$ defined
on $(\varphi_2, \zeta_1,\zeta_2) \in \mathbb{T}_3, \ \lambda \in I_{\lambda_0}, \ \chi \in [\chi_0,\infty), \ \beta_k \in [\beta_m, \beta_M],$
and mapping
 \begin{eqnarray*}
T(w) = \int_{-\infty}^0
e^{\beta_k \xi} Q_4(w(\varphi_{2\xi},\zeta_{1\xi}, \zeta_{2\xi}, \lambda,\chi,\beta_k),\varphi_{2\xi},\zeta_{1\xi}, \zeta_{1\xi},\lambda)d\xi,
 \end{eqnarray*}
where $Q_4$ is the right hand side of (\ref{mm1jext}), and
$\varphi_{2\xi}=\varphi_2(\xi,\varphi,\zeta_{1},\zeta_{2},\lambda)$,
$\zeta_{1\xi}=\beta\xi+\zeta_{1}$, $\zeta_{2\xi}=\alpha\xi+\zeta_{2}$
is the solution of (\ref{mm2jext})--(\ref{mm3jext})
for $b_1=w(\varphi_{2},\zeta_{1}, \zeta_{2}, \lambda, \chi,\beta_k)$.

One can verify that the mapping $ T(w)$ maps the space (\ref{space}) into itself.

Analogously to the proof of Lemma \ref{lemma2}, we apply the fiber contraction theorem and show that
 there exists a unique fixed point
\begin{equation}
w = \eta_{1} v_{k1}(\zeta_{1},\chi,\lambda)+
\eta_{2} v_{k2}(\varphi_{2},\zeta_{1},\zeta_{2},\chi,\lambda)+
\eta_{3} v_{k3}(\varphi_{2},\zeta_{1},\zeta_{2},\chi,\lambda)\label{eq:w}
\end{equation}
of $T(w)$ in the neighborhood
of $(0,0)\in C^{l-4}(\mathbb{T}_{1}'\times\mathbb{T}_{3})\times I_{\lambda_{0}}$.

Functions in right-hand side of (\ref{eq:w}) are $C^{l-4}$ smooth and $2\pi$-periodic in
$\varphi_2, \zeta_1, \zeta_2,$ such that $\|v_{kj}\|_{C^{l-4}} \le M_2,$ where positive
constant $M_2$ does not depend on $\lambda,$ $\chi$ and $\beta_k.$

Respectively, there exist $\mu_{0}>0$ and $c_{0}>0$ such that for
all $0<\mu\le\mu_{0}$ and $\varepsilon\le c_{0}\sqrt{\mu}$ the system
(\ref{mm1j})--(\ref{mm3j}) possesses the invariant manifold\begin{equation}
b_{1}=\mu^{2} v_{k1}(\zeta_{1},\mu)+\varepsilon v_{k2}(\varphi_{2},\zeta_{1},\zeta_{2},\mu,\varepsilon)+
\frac{\varepsilon^{2}}{\mu} v_{k3}(\varphi_{2},\zeta_{1},\zeta_{2},\mu,\varepsilon).\label{eq:b11}
\end{equation}
Here we have used the same notations $v_{k1}$, $v_{k2},$ and $v_{k3}$
for the functions depending on parameters $\lambda,\mu$ in (\ref{eq:w})
and the corresponding functions depending on $\mu,\varepsilon$ in
(\ref{eq:b11}).

Therefore the system (\ref{mm1})--(\ref{mm2}) has
$2N$ integral manifolds
\begin{eqnarray}
& & \Pi_{j} = \{(\vartheta_{j}^{0}+\mu^{2}v_{j0}(\beta t,\mu)+\varepsilon v_{j1}(\varphi_{2},\beta t,\alpha t,\mu,\varepsilon)+ \nonumber \\
& & + \frac{\varepsilon^{2}}{\mu}v_{j2}(\varphi_{2},\beta t,\alpha t,\mu,\varepsilon):\ \varphi_{2}\in\mathbb{T}_{1},t\in\mathbb{R}\}.
\nonumber
\end{eqnarray}
The manifolds $\Pi_{2k}$, $k=1,...,N,$ are asymptotically stable
\cite{Samoilenko1991,Yi1993a}, i.e. there exists $\nu_{0}=\nu_{0}(\mu_{0},c_{0})$
such that if $\rho((\psi_{20},\varphi_{20}),\Pi_{2k}(t_{0}))\le\nu_{0}$
at time $t_{0}$ then there is a unique $\tilde{\varphi}_{20}$ such that
\begin{eqnarray}
 &  & |\psi_{2}(t,t_{0},\psi_{20},\varphi_{20})-\psi_{2}(t,t_{0},\vartheta_{2k}^{0}+v_{2k}(\tilde{\varphi}_{20},\beta t_{0},\alpha t_{0},\mu,\varepsilon),\tilde{\varphi}_{20})|\nonumber \\[2mm]
 &  & +|\varphi_{2}(t,t_{0},\psi_{20},\varphi_{20})-\varphi_{2}(t,t_{0},\vartheta_{2k}^{0}+v_{2k}(\tilde{\varphi}_{20},\beta t_{0},\alpha t_{0},\mu,\varepsilon),\tilde{\varphi}_{20})|\nonumber \\[2mm]
 &  & \le\mathcal{L}_{3}e^{-\mu^{2}\kappa_{3}(t-t_{0})}\left(|\varphi_{20}-
 \tilde{\varphi}_{20}|+|\psi_{20}-\vartheta_{2k}^{0}-v_{2k}(\tilde{\varphi}_{20},\beta t_{0},\alpha t_{0},\mu,\varepsilon)|\right), \label{sta2}\end{eqnarray}
 where $t\ge t_{0},$ constants $\mathcal{L}_{3}\ge 1$ and $\kappa_{3}>0$ are
independent on $\mu,\varepsilon,\alpha,$ $\rho(.,.)$ is metric in
$\mathbb{R}\times\mathbb{T}_{1},$ $\Pi_{2k}(t_{0})$ is the cross-section
of $\Pi_{2k}$ for $t=t_{0}.$

Since the function $b_{1}=v_{2k}(\varphi_{2},\zeta_{1},\zeta_{2},\mu,\varepsilon)$
is a smooth invariant manifold of (\ref{mm1j})--(\ref{mm3j}) we obtain
\begin{eqnarray}
 & & \frac{\partial v_{2k}}{\partial\varphi_{2}}\Biggl(\frac{\alpha_{0}}{\mu^{2}}+
 \bar{S}_{21}(\vartheta_{2k}^{0}+v_{2k},\mu) +
 \mu^2 \tilde{S}_{21}(\vartheta_{2k}^{0}+v_{2k},\zeta_1,\mu) + \nonumber \\
 & & + \varepsilon\tilde{S}_{22}(\vartheta_{2k}^{0}+v_{2k},\varphi_{2},\zeta_{1},\zeta_{2},\mu,\varepsilon)
 + \frac{\varepsilon^{2}}{\mu}\tilde{S}_{23}(\vartheta_{2k}^{0}+v_{2k},\varphi_{2},\zeta_{1},
 \zeta_{2},\mu,\varepsilon)\Biggl) \nonumber \\
  & & + \frac{\partial v_{2k}}{\partial\zeta_{1}}\frac{\beta}{\mu^{2}}+\frac{\partial v_{2k}}{\partial\zeta_{2}}\frac{\alpha}{\mu^{2}} = - \beta_{k}v_{2k}+\bar{G}_{2}(v_{2k})v_{2k}^{2}+\mu^{2}\tilde{S}_{11}(\vartheta_{2k}^{0}+v_{2k},\zeta_{1},\mu)\nonumber \\[2mm]
 &  & +\varepsilon\tilde{S}_{12}(\vartheta_{2k}^{0}+v_{2k},\varphi_{2},\zeta_{1},\zeta_{2},\mu,\varepsilon)+
 \frac{\varepsilon^{2}}{\mu}\tilde{S}_{13}(\vartheta_{2k}^{0}+
 v_{2k},\varphi_{2},\zeta_{1},\zeta_{2},\mu,\varepsilon).\label{intman}\end{eqnarray}
Taking into account (\ref{intman}) and making the change of variables
\[
b_{1}=v_{2k}(\varphi_{2},\zeta_{1},\zeta_{2},\mu,\varepsilon)+b_{2}\]
in (\ref{4mm1})--(\ref{4mm3}), we obtain the following system
(analogously as in the proof of Lemma~\ref{lemma2})
\begin{eqnarray*}
 &  & \frac{db_{2}}{d\tau}=\left[-\beta_{k}+T_{0}b_{2}+\mu^{2}T_{1}+\varepsilon T_{2}+\frac{\varepsilon^{2}}{\mu}T_{3}\right]b_{2}  \\
 &  & \frac{d\varphi_{2}}{d\tau}=\frac{\alpha_{0}}{\mu^{2}}+\bar{S}_{21}(\vartheta_{2k}^{0}+v_{2k}+b_{2},\mu)
 + \mu^2 \tilde {S}_{21}(\vartheta_{2k}^{0}+v_{2k}+b_{2},\zeta_1,\mu)  \\
 & &  + \varepsilon\tilde{S}_{22}(\vartheta_{2k}^{0}+v_{2k}+b_{2},\varphi_{2},\zeta_{1},\zeta_{2},\mu,\varepsilon)
 + \frac{\varepsilon^{2}}{\mu}\tilde{S}_{23}(\vartheta_{2k}^{0}+v_{2k}+
 b_{2},\varphi_{2},\zeta_{1},\zeta_{2},\mu,\varepsilon),\label{5mm2}\\
 &  & \frac{d\zeta_{1}}{d\tau}=\frac{\beta}{\mu^{2}},\quad\frac{d\zeta_{2}}{d\tau}=
 \frac{\alpha}{\mu^{2}},\label{5mm3}\end{eqnarray*}
with $C^{l-4}$-smooth functions $T_j$ of $(b_{2},\varphi_{2},\zeta_{1},\zeta_{2},\mu,\varepsilon)$,
periodic in $\varphi_{2},\zeta_{1},\zeta_{2}$ and uniformly bounded
for $b_{2}$ from some neighborhood of zero.

For sufficiently small $b_{2}$, $\mu^{2}$, $\varepsilon$, and $\varepsilon^{2}/\mu$,
we can obtain the uniform estimate $$\left|T_{0}b_{2}+\mu^{2}T_{1}+\varepsilon T_{2}+\frac{\varepsilon^{2}}{\mu}T_{3}\right|\le\beta_{k}/2.$$
Therefore the following inequality holds \begin{equation}
\left|b_{2}(t)\right|\le\left|b_{2}(t_{0})\right|e^{-\mu^{2}\frac{\beta_{k}}{2}(t-t_{0})}\label{eq:lin3}\end{equation}
 for all $b_{2}(t_{0})$ such that $\left|b_{2}(t_{0})\right|\le b_{20}$
with some $b_{20}>0$. Since $v_{2k}$ is a sum of three terms proportional
to $\mu^{2}$, $\varepsilon$, and $\varepsilon^{2}/\mu$ respectively
and $b_{20}$ is independent on these parameters, for small enough
$\mu^{2}$, $\varepsilon$, and $\varepsilon^{2}/\mu$, it holds $b_{20}-\|v_{2k}\|_{C}\ge\delta_{0}>0$.
Using $b_{1}=b_{2}+v_{2k}$, one can conclude that for all $b_{1}$
with $\left|b_{1}\right|\le\delta_{0}$ the inequality $\left|b_{2}\right|\le b_{20}$
and estimate (\ref{eq:lin3}) holds.

As a result, if $0<\mu\le\mu_{0}$ and $\varepsilon\le c_{0}\sqrt{\mu}$
and solution ($\psi_{2}(t),\varphi_{2}(t)$) of the system (\ref{mm1})
-- (\ref{mm2}) satisfies the condition $|\psi_{2}(t_{0})-\vartheta_{2k}^{0}|\le\delta_{0}$
at initial moment of time $t_{0}$ then
\begin{eqnarray}
& & \left|\psi_{2}(t)-\vartheta_{2k}^{0}-v_{2k}(\varphi_{2}(t),\beta t,\alpha t,\mu,\varepsilon)\right|
\nonumber \\[2mm]
& & \hspace{15mm }\le \left|\psi_{2}(t_{0})-\vartheta_{2k}^{0}-v_{2k}(\varphi_{2}(t_{0}),\beta t_{0},
\alpha t_{0},\mu,\varepsilon)\right|e^{-\mu^{2}\frac{\beta_{k}}{2}(t-t_{0})}\label{eq:ddd}
\end{eqnarray}
for all $t\ge t_{0}$.

Inequalities (\ref{sta2}) and (\ref{eq:ddd}) assure the exponential
attraction of all solutions of (\ref{mm1})--(\ref{mm2}) that start
at $t=t_{0}$ from a small neighborhood of the unperturbed manifold
$\psi_{2}=\vartheta_{2k}^{0}$ to solutions of the perturbed manifold
$\Pi_{2k}$ according to the estimation (\ref{sta2}).

Considering the system (\ref{mm1})--(\ref{mm2}) in the neighborhood
of the manifolds $\Pi_{2k-1}$, $k=1,\dots,N$, we obtain similarly
that these manifolds are exponentially unstable according to (\ref{sta2-}).
\begin{corollary}
\label{cor6} The system (\ref{m1})--(\ref{m2}) has $2N$ integral
manifolds \begin{eqnarray}
\mathcal{P}_{j}^{0}=\{(\beta t+\vartheta_{j}^{0}+\tilde{v}_{j}(\varphi,\beta t,\alpha t,\mu,\varepsilon),\varphi,t):\ \varphi\in\mathbb{T}_{1},t\in\mathbb{R}\},\label{ma3}\end{eqnarray}
where the $C^{l-4}$-smooth function\[
\tilde{v}_{j}=\mu^{2}\tilde{v}_{j0}(\beta t,\mu)+\varepsilon\tilde{v}_{j1}(\varphi,\beta t,\alpha t,\mu,\varepsilon)+\frac{\varepsilon^{2}}{\mu}\tilde{v}_{j2}(\varphi,\beta t,\alpha t,\mu,\varepsilon),\]
is periodic in $\varphi,\beta t,$ and \textup{$\alpha t$.} On the
manifolds (\ref{ma3}), the system (\ref{m1})--(\ref{m2}) reduces
to \begin{align}
\frac{d\varphi}{dt} & =\alpha_{0}+\mu^{2}S_{21}(\beta t+\vartheta_{j}^{0}+\tilde{v}_{j},\beta t,\mu)+\varepsilon\mu^{2}S_{22}(\beta t+\vartheta_{j}^{0}+\tilde{v}_{j},\varphi,\beta t,\alpha t,\mu,\varepsilon)\nonumber \\
 & \hspace{15mm}+\varepsilon^{2}\mu S_{23}(\beta t+\vartheta_{j}^{0}+\tilde{v}_{j},\varphi,\beta t,\alpha t,\mu,\varepsilon).\label{ma3na}\end{align}
The manifolds corresponding to $j=2k,k=1,...,N,$ are exponentially stable
for $t\to+\infty$ and the manifolds corresponding to $j=2k-1,k=1,...,N,$
are exponentially stable for $t\to-\infty.$ \end{corollary}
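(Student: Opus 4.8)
The plan is to obtain the manifolds $\mathcal{P}_j^0$ by pulling the manifolds $\Pi_j$ from Lemma~\ref{lemma4} back through the chain of coordinate changes that connects system~(\ref{m1})--(\ref{m2}) with system~(\ref{mm1})--(\ref{mm2}). This chain consists of the shift $\psi=\beta t+\psi_1$ (with $\varphi$ unchanged), followed by the near-identity averaging substitution $(\psi_1,\varphi)\mapsto(\psi_2,\varphi_2)$ introduced just before~(\ref{mm1})--(\ref{mm2}). First I would check that this substitution is a genuine diffeomorphism that is $\mathcal{O}(\mu^2)$-close to the identity: its two defining integrals have $2\pi$-periodic, zero-mean integrands (by the very definition of $G$ and $\bar{S}_{21}$), hence are bounded uniformly in $t$, and the implicit dependence on $\psi_1$ in the integrands is resolved by a contraction argument for $\mu$ small. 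Consequently the inverse change expresses $(\psi_2,\varphi_2)$ as $\mathcal{O}(\mu^2)$-perturbations of $(\psi_1,\varphi)$, preserving $4\pi$/$2\pi$-periodicity and $C^{l-4}$-smoothness.

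Next I would substitute the manifold equation $\psi_2=\vartheta_j^0+v_j(\varphi_2,\beta t,\alpha t,\mu,\varepsilon)$ from Lemma~\ref{lemma4} into this chain. Combining $\psi=\beta t+\psi_1$ with $\psi_1=\psi_2+\mathcal{O}(\mu^2)$ yields
\[
\psi=\beta t+\vartheta_j^0+\tilde v_j(\varphi,\beta t,\alpha t,\mu,\varepsilon),
\]
where $\tilde v_j$ collects $v_j$ together with the $\mathcal{O}(\mu^2)$ integral correction term, now evaluated along the manifold. Since $v_j=\mu^2v_{j0}+\varepsilon v_{j1}+(\varepsilon^2/\mu)v_{j2}$ and the correction is proportional to $\mu^2$ and, at leading order, independent of $\varphi$ and $\alpha t$, the asserted decomposition $\tilde v_j=\mu^2\tilde v_{j0}(\beta t,\mu)+\varepsilon\tilde v_{j1}+(\varepsilon^2/\mu)\tilde v_{j2}$ follows, with periodicity and smoothness inherited from the lemma. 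This identifies $\mathcal{P}_j^0$ as the image of $\Pi_j$; since the changes leave $\varphi$ and $t$ as the free parameters, the reduced equation~(\ref{ma3na}) is obtained by inserting $\psi=\beta t+\vartheta_j^0+\tilde v_j$ directly into~(\ref{m2}).

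Finally, the stability statement transfers automatically. The coordinate changes are diffeomorphisms uniformly close to the identity that do not reverse time, so an estimate of type~(\ref{sta2}) for $\Pi_{2k}$ becomes an estimate of the same exponential type for $\mathcal{P}_{2k}^0$, giving attraction as $t\to+\infty$; likewise the backward estimate~(\ref{sta2-}) for $\Pi_{2k-1}$ gives attraction of $\mathcal{P}_{2k-1}^0$ as $t\to-\infty$. The main technical obstacle I anticipate is precisely the first step: verifying that the implicitly defined averaging substitution is well defined, invertible, and near-identity uniformly in $(\mu,\varepsilon,\alpha,\beta)$ over the relevant ranges, and that its bounded integral correction respects the exact $\mu^2,\varepsilon,\varepsilon^2/\mu$ scaling required for $\tilde v_j$.
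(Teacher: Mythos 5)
Your proposal is correct and takes essentially the same route the paper intends: Corollary~\ref{cor6} is stated without a separate proof precisely because it follows from Lemma~\ref{lemma4} by undoing the shift $\psi=\beta t+\psi_{1}$ and the near-identity averaging substitution, exactly as you describe. Your treatment of the implicitly defined substitution (zero-mean, hence bounded and periodic, integral corrections resolved by contraction for small $\mu$), the preservation of the $\mu^{2},\varepsilon,\varepsilon^{2}/\mu$ scaling, and the transfer of the forward estimate (\ref{sta}) and backward estimate (\ref{sta2-}) through time-preserving near-identity diffeomorphisms supplies the details the paper leaves tacit.
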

\begin{lemma}
\label{lemma7}There exist $\mu_{0}>0$ and $c_{0}>0$ such that for
all $0<\mu\le\mu_{0}$ and $\varepsilon\le c_{0}\sqrt{\mu}$ the solutions
of (\ref{mm1})--(\ref{mm2}) have the following properties:

(i) if a solution $(\psi_{2}(t),\varphi_{2}(t))$ at a certain time
$t=t_{0}$ has the value $\psi_{2}(t_{0})=\vartheta_{2k-1}^{0}+\delta,$
then it reaches the value $\psi_{2}(t_{0}+T)=\vartheta_{2k}^{0}-\delta$
after a finite time interval of the length $T=T(\delta,\mu,\varepsilon)$;

(ii) if a solution $(\psi_{2}(t),\varphi_{2}(t))$ at a certain time
$t=t_{0}$ has the value $\psi_{2}(t_{0})=\vartheta_{2k+1}^{0}-\delta,$
then it reaches the value $\psi_{2}(t_{0}+T)=\vartheta_{2k}^{0}+\delta$
after a finite time interval of the length $T=T(\delta,\mu,\varepsilon)$.
(Here we identify $\vartheta_{2N+1}^{0}$ with $\vartheta_{1}^{0}$
and $\vartheta_{0}^{0}$ with $\vartheta_{2N}^{0}$).
\end{lemma}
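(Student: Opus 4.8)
The plan is to exploit the fact that on the closed subinterval $[\vartheta_{2k-1}^{0}+\delta,\vartheta_{2k}^{0}-\delta]$ the leading (averaged) part of the right-hand side of (\ref{mm1}) is bounded away from zero, while every remaining term is of strictly higher order in the parameters and can be made negligible. Concretely, the analysis preceding Lemma~\ref{lemma4} shows that $G(\psi_{2})-\Delta$ is positive on $(\vartheta_{2k-1}^{0},\vartheta_{2k}^{0})$ and that
$$
c(\delta):=\min_{\psi_{2}\in[\vartheta_{2k-1}^{0}+\delta,\vartheta_{2k}^{0}-\delta]}\bigl(G(\psi_{2})-\Delta\bigr)>0
$$
for every sufficiently small $\delta>0$.

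First I would estimate the perturbation terms. Since $\tilde{S}_{11},\tilde{S}_{12},\tilde{S}_{13}$ are $C^{l-4}$-smooth and periodic, they are bounded by some constant $M$ uniformly in all their arguments and in the admissible parameters. Using $\varepsilon\le c_{0}\sqrt{\mu}$, so that $\varepsilon^{2}/\mu\le c_{0}^{2}$, the right-hand side of (\ref{mm1}) on $[\vartheta_{2k-1}^{0}+\delta,\vartheta_{2k}^{0}-\delta]$ satisfies
$$
\frac{d\psi_{2}}{dt}\ge\mu^{2}\bigl(G(\psi_{2})-\Delta\bigr)-M\bigl(\mu^{4}+\varepsilon\mu^{2}+\varepsilon^{2}\mu\bigr)\ge\mu^{2}\Bigl[c(\delta)-M\bigl(\mu^{2}+\varepsilon+\tfrac{\varepsilon^{2}}{\mu}\bigr)\Bigr].
$$
Choosing $\mu_{0}$ and $c_{0}$ small enough (depending on $\delta$) the bracket exceeds $c(\delta)/2$ uniformly in $\varphi_{2}$, $t$, and the parameters, so that $d\psi_{2}/dt\ge\mu^{2}c(\delta)/2>0$ throughout the interval.

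This positive lower bound shows that $\psi_{2}(t)$ is strictly increasing and cannot leave the interval through its left endpoint; starting from $\psi_{2}(t_{0})=\vartheta_{2k-1}^{0}+\delta$ it must therefore reach the right endpoint $\vartheta_{2k}^{0}-\delta$ after a time no larger than
$$
T=T(\delta,\mu,\varepsilon)\le\frac{2\bigl(\vartheta_{2k}^{0}-\vartheta_{2k-1}^{0}-2\delta\bigr)}{\mu^{2}c(\delta)},
$$
which proves (i). Statement (ii) follows by the same argument with reversed signs: on $[\vartheta_{2k}^{0}+\delta,\vartheta_{2k+1}^{0}-\delta]$ one has $G(\psi_{2})-\Delta\le-c'(\delta)<0$, hence $d\psi_{2}/dt\le-\mu^{2}c'(\delta)/2<0$, so $\psi_{2}(t)$ decreases monotonically from $\vartheta_{2k+1}^{0}-\delta$ to $\vartheta_{2k}^{0}+\delta$ in finite time.

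The argument is essentially a one-dimensional comparison estimate, so no serious obstacle arises; the only point requiring care is that the lower bound on $d\psi_{2}/dt$ must hold \emph{uniformly} with respect to the fast variables $\varphi_{2},\beta t,\alpha t$ and the small parameters. This uniformity is exactly what the scaling $\varepsilon\le c_{0}\sqrt{\mu}$ guarantees, since it keeps $\varepsilon^{2}/\mu$ bounded and forces each perturbation term to be of strictly higher order than the averaged term $\mu^{2}\bigl(G(\psi_{2})-\Delta\bigr)$.
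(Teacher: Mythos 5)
Your proposal is correct and follows essentially the same route as the paper's own proof: bound the perturbation terms uniformly (using $\varepsilon^{2}/\mu\le c_{0}^{2}$), obtain a lower bound $d\psi_{2}/dt\ge\mu^{2}\cdot\mathrm{const}>0$ on the closed subinterval $[\vartheta_{2k-1}^{0}+\delta,\vartheta_{2k}^{0}-\delta]$, and integrate to get a traversal time of order $1/\mu^{2}$, with the symmetric sign argument for (ii). The only cosmetic difference is that the paper packages the perturbation bound as $m_{0}=\sup\bigl(\mu^{2}\tilde{S}_{11}+\varepsilon\tilde{S}_{12}+\tfrac{\varepsilon^{2}}{\mu}\tilde{S}_{13}\bigr)$ and requires $m_{0}<m$, while you absorb it into a factor $M\bigl(\mu^{2}+\varepsilon+\tfrac{\varepsilon^{2}}{\mu}\bigr)$ and require the bracket to exceed $c(\delta)/2$; these are the same estimate.
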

\textbf{Proof.} Let us consider the interval $(\vartheta_{2k-1}^{0},\vartheta_{2k}^{0}).$
The intervals $(\vartheta_{2k}^{0},\vartheta_{2k+1}^{0})$ can be
considered similarly. Denote \[
m=\min_{\xi\in[\vartheta_{2k-1}^{0}+\delta,\vartheta_{2k}^{0}-\delta]}({G}(\xi)-\Delta)>0.\]
The right-hand side of  (\ref{mm1}) can be estimated as follows
\begin{eqnarray*}
\frac{d\psi_{2}(t)}{dt}=\mu^{2}({G}(\psi_{2})-\Delta)+\mu^{4}\tilde{S}_{11}+\varepsilon\mu^{2}\tilde{S}_{12}+\varepsilon^{2}\mu\tilde{S}_{13}\ge\mu^{2}\left(m-m_{0}\right),
\label{3l1}
\end{eqnarray*}
where $m_{0}=m_{0}(\mu_{0},c_{0})=\sup\left(\mu^{2}\tilde{S}_{11}+\varepsilon\tilde{S}_{12}+\frac{\varepsilon^{2}}{\mu}\tilde{S}_{13}\right).$
By choosing sufficiently small $\mu_{0}$ and $c_{0}$ , one can obtain
$m_{0}<m$. Hence \begin{eqnarray*}
 &  & \psi_{2}(t)\ge\psi_{2}(t_{0})+\mu^{2}(m-m_{0})(t-t_{0}),\\
 &  & t-t_{0}\le\frac{\psi_{2}(t)-\psi_{2}(t_{0})}{\mu^{2}(m-m_{0})}\le
 \frac{\vartheta_{2k}^{0}-\vartheta_{2k-1}^{0}}{\mu^{2}(m-m_{0})}
 \le \frac{2\pi}{\mu^{2}(m-m_{0})}
  =T(\delta,\mu,\varepsilon).\end{eqnarray*}

\textbf{Proof of Theorem \ref{theorem01}.} Theorem \ref{theorem01}
follows from Lemma~\ref{lemma2} and the following chain of coordinate
changes: averaging transformations from section \ref{sec:Averaging},
(\ref{eq:polar}), and the local coordinates (\ref{eq:zamina}) in the neighborhood of the
invariant manifold $\mathcal{T}_2$.

\textbf{Proof of Theorem \ref{theorem02}}. In Lemma \ref{lemma4},
the existence and local stability properties of the integral manifolds
$\Pi_{j}$, $j=1,...,2N$ have been proved. The integral manifolds
$\mathfrak{N}_{j}$ correspond to the manifolds $\Pi_{j}$ after the
averaging and transformations (\ref{eq:polar}) and (\ref{eq:zamina}).

It has been proved in Theorem \ref{theorem01} that all solutions
from some neighborhood of the torus $\mathcal{T}_{2}$ are approaching
the perturbed integral manifold $\mathfrak{M}(\alpha,\beta,\gamma)$.
Therefore, for the proof of the statement 2 of Theorem \ref{theorem02}
it is enough to show that the solutions on this manifold are approaching
the solutions on one of the manifolds $\mathfrak{N}_{j}$.

Let us fix any positive $\varepsilon_1.$ 
For the set $S$ of singular values of $G$ we define two following sets:
$$\mathcal{B}(\varepsilon_1) = \{ g \in [G_-, G_+]; \ dist(g, S) \ge \varepsilon_1\},$$
$$\mathcal{A}(\varepsilon_1) = \{ \theta \in [0,2\pi]: \  G(\theta) \in \mathcal{B}(\varepsilon_1)\}.$$

Taking into account that the sets  $\mathcal{B}(\varepsilon_1)$ and $\mathcal{A}(\varepsilon_1)$ are compact
one can prove that there exists a positive constant $\varsigma$ such that
\begin{equation} \label{estpro}
\left|\frac{d G(\theta)}{d\theta}\right| \ge \varsigma
\quad {\rm for \ all} \quad  \theta \in \mathcal{A}(\varepsilon_1).
\end{equation}

Let us consider the system (\ref{mm1})--(\ref{mm2}), which describes
the dynamics on the manifold $\mathfrak{M}(\alpha,\beta,\gamma)$.
For any $\alpha, \gamma$ and $\beta$ satisfying (\ref{eq:cond2}) and (\ref{eq:cond3}) there exists
a finite number of points $\vartheta_j^0, j = 1,...,2N,$ (solutions of the equation
$\beta - \beta_0 = \mu^2  G(\theta)$), which define the
integral manifolds $\Pi^0_j, j = 1,...,2N,$ of the averaged system (\ref{mm01}) - (\ref{mm02}).
Note that number $N$ depends on the parameters $\alpha, \gamma$ and $\beta.$

By Lemma \ref{lemma4}, for fixed $\Delta \in \mathcal{B}(\varepsilon_1)$, there exist $\mu_{0}>0$ and $c_{0}>0$ such that
for all $0<\mu\le\mu_{0}$ and $\varepsilon\le c_{0}\sqrt{\mu}$ the system
(\ref{mm1})--(\ref{mm2}) has $2N$ integral manifolds
$\Pi_{j}.$ Due to the uniform estimate (\ref{estpro}), it follows from the proof of Lemma
\ref{lemma4} that constants $\mu_{0}>0$ and $c_{0}>0$ can be chosen the same for all
$\Delta \in \mathcal{B}(\varepsilon_1),$ and therefore for all
$\alpha, \gamma$ and $\beta$ satisfying (\ref{eq:cond2}) and (\ref{eq:cond3}).

All the manifolds $\Pi_{2k}$ are asymptotically stable in the sense of
the formula (\ref{sta}) and the manifolds $\Pi_{2k-1}$, $1\le k\le N$
are asymptotically unstable in the sense of the formula (\ref{sta2-}).
Therefore, if $|\psi_{20}-\vartheta_{2k-1}^{0}|<\delta_{0}$ and $(\psi_{20},\varphi_{20})\notin\Pi_{2k-1}$
then \begin{eqnarray}
|b_{2}(t)|\ge K_{2}e^{\mu^{2}\kappa_{2}(t-t_{0})}|b_{2}(t_{0})|,\ t\ge t_{0},\label{exp3}\end{eqnarray}
where $K_{2}\ge1,$ $\kappa_{2}>0$ are some constants and  
$b_{2}(t)=\psi_{2}(t,t_{0},\psi_{20},\varphi_{20},\varepsilon,\mu)-\psi_{2}(t,t_{0},
\vartheta_{2k-1}^{0}+\varepsilon v_{2k-1}(\varphi_{20},\beta t_{0},\alpha t_{0},
\varepsilon,\mu),\varphi_{20},\varepsilon,\mu).$

It follows from (\ref{exp3}) that on a finite time interval $T$
depending on values $\psi_{20}$ and $\mu,\varepsilon$ the solution
$(\psi_{2}(t),\varphi_{2}(t))$ of (\ref{mm1})--(\ref{mm2}), whose initial value $(\psi_{2}(t_{0}),\varphi_{2}(t_{0}))$
for $t=t_{0}$ does not belong to the manifold $\Pi_{2k-1}$, i.e.
\[
\psi_{2}(t_{0})\neq\vartheta_{2k-1}^{0}+v_{2k-1}(\varphi_{2}(t_{0}),\beta t_{0},\alpha t_{0},\varepsilon,\mu),\]
and $|\psi_{2}(t_{0})-\vartheta_{2k-1}^{0}|<\delta_{0}$, reaches
the boundary of $\delta_{0}$-neighborhood of $\vartheta_{2k-1}^{0},$
more exactly, values $\psi_{2}(t_{1})=\vartheta_{2k-1}^{0}-\delta_{0}$
or $\psi_{2}(t_{2})=\vartheta_{2k-1}^{0}+\delta_{0}.$

Then, by Lemma \ref{lemma7}, on a finite time interval, this solution
reaches $\delta_{0}$-neighborhood of point $\vartheta_{2k}^{0}$
or, respectively, $\delta_{0}$-neighborhood of point $\vartheta_{2k+2}^{0}$,
where $\delta_{0}$ is defined from Lemma \ref{lemma4}.

Next, by Lemma \ref{lemma4}, as $t$ further increases, the solution
is attracted to one of the stable integral manifolds $\Pi_{2k}$ or
$\Pi_{2k+2}.$

As a result, solutions $(\psi(t),\varphi(t))$ of the system (\ref{m1})
-- (\ref{m2}) that, at initial point $t=t_{0}$ do not belong to the
unstable integral manifolds $\mathcal{P}_{2k-1},k=1,...,N,$ i.e., \[
\psi(t_{0})\neq\vartheta_{2k-1}^{0}+\beta t_{0}+\tilde{v}_{2k-1}(\varphi(t_{0}),\beta t_{0},\alpha t_{0},\mu,\varepsilon),\]
are attracted for $t\ge t_{0}$ to solutions $(\bar{\psi}(t),\bar{\varphi}(t))$
on one of the stable integral manifolds $\mathcal{P}_{2k}$\begin{align*}
\bar{\psi}(t) & =\beta t+\vartheta_{2k}^{0}+\tilde{v}_{2k}(\varphi(t),\beta t,\alpha t,\mu,\varepsilon),\\
\bar{\varphi}(t) & \mbox{\,\, is a solution of system (\ref{ma3na}) for }j=2k,\end{align*}
so that \[
|\psi(t)-\bar{\psi}(t)|+|\varphi(t)-\bar{\varphi}(t)|\le\mathcal{L}_{2}e^{-\mu^{2}\kappa_{2}(t-T)}
\left(|\psi(T)-\bar{\psi}(T)|+|\varphi(T)-\bar{\varphi}(T)|\right),\ t\ge T,\]
 for some $T=T(\psi(t_{0}),\mu,\varepsilon)$ and some $\mathcal{L}_{2}\ge1$.

If a solution $(\psi(t),\varphi(t))$ of (\ref{m1})--(\ref{m2})
at the initial point $t=t_{0}$ belongs to one of integral manifolds $\mathcal{P}_{2k+1}$
then this solution has the following form
\begin{eqnarray*}
 &  & \psi(t)=\beta t+\vartheta_{2k+1}^{0}+\tilde{v}_{2k+1}(\varphi(t),\beta t,\alpha t,\mu,\varepsilon),\nonumber \\[1mm]
 &  & \varphi(t)\mbox{\,\, is a solution of system (\ref{ma3na}) for }j=2k+1.\label{exp5}
\end{eqnarray*}

Using the last formulas and Lemma~\ref{lemma2}, we conclude that
any solution $(h(t),\psi(t),\varphi(t))$ of (\ref{ss1})
-- (\ref{ss3}) that starts from the $\nu_{0}$-neighborhood
of the integral manifold $\mathcal{T}_2$ is attracted
to one of the solutions $(\bar{h}(t),\bar{\psi}(t),\bar{\varphi}(t))$
on the integral manifold $\mathfrak{M}_{\mu,\varepsilon}$ such that
\begin{eqnarray*}
 &  & \bar{h}(t)=u(\bar{\psi}(t),\bar{\varphi}(t),\beta t,\alpha t,\mu,\varepsilon),\nonumber \\[1mm]
 &  & \bar{\psi}(t)=\beta t+\vartheta_{j}^{0}+\tilde{v}_{j}(\bar{\varphi}(t),\beta t,\alpha t,\mu,\varepsilon),\nonumber \\[1mm]
 &  & \bar{\varphi}(t)\mbox{\,\, is a solution of system (\ref{ma3na})}\label{62}
\end{eqnarray*}
with some $j,\ 1\le j\le2N.$ More exactly, there exist constants $L \ge1, \kappa > 0$ and $T=T(h(t_{0}),\psi(t_{0}),\varphi(t_{0}))\ge t_{0}$
such that for $t\ge T:$
\begin{eqnarray*}
 &  & |h(t)-u(\bar{\psi}(t),\bar{\varphi}(t),\beta t,\alpha t,\mu,\varepsilon)|+|\psi(t)-\bar{\psi}(t)|+|\varphi(t)-\bar{\varphi}(t)|\le\nonumber \\
 &  & \le Le^{-\kappa(t-T)}\Bigl(|h(T)-u(\bar{\psi}(T),\bar{\varphi}(T),\beta T,\alpha T,\mu,\varepsilon)|+\nonumber \\
 &  & +|\psi(T)-\bar{\psi}(T)|+|\varphi(T)-\bar{\varphi}(T)|\Bigl).\label{exp6}
\end{eqnarray*}

\textbf{Proof of Theorem \ref{thm:main}}. Under the conditions of
Theorem \ref{thm:main}, the conditions of Theorem \ref{theorem02}
are satisfied. Therefore, every solution $(x(t),y(t))$ of the system
(\ref{01})--(\ref{02}) that at a certain moment of time $t_{0}$
belongs to a $\delta$-neighborhood of the torus $\mathcal{T}_{2}$
tends to some solution on one of the integral manifolds $\mathfrak{N}_{j}(\alpha,\beta,\gamma), j =1,...,2N.$ Hence,
for any $\varepsilon>0$ the following inequality holds \[
\biggl\| x(t)-x_{0}(\beta t+\vartheta_{j}^{0})\biggr\|+\biggl||y(t)|-|y_{0}(\beta t+\vartheta_{j}^{0})|\biggr|<\epsilon\]
with some $1\le j\le N$ for all moments of time starting from $T(x(t_{0}),y(t_{0}))$.

\section*{Acknowledgments}
 The authors gratefully acknowledge the  scientific cooperation and
the helpful discussions with K. R. Schneider over many years which lead to
the two preliminary versions \cite{Schneider2005,Samoilenko2005} of the
present research.

\medskip
Received xxxx 20xx; revised xxxx 20xx.
\medskip

\end{document}